\newtheorem{thm}{Theorem}[section]
\newtheorem{lemma}[thm]{Lemma}
\newtheorem{proposition}[thm]{Proposition}
\newtheorem{corollary}[thm]{Corollary}
\theoremstyle{definition}
\newtheorem{remark}[thm]{Remark}
\newtheorem{defn}[thm]{Definition}
\newtheorem{example}[thm]{Example}
\def\ie{\emph{i.e.\,\,}}
\def\eg{\emph{e.g.\,\,}}
\def\Hom{\text{Hom}}
\newcommand{\mcL}{\mathcal{L}}
\def\id{\mathrm{id}}
\def\leq{\leqslant}
\def\geq{\geqslant}
\def\ra{\rightarrow}
\newcommand{\Epin}{{\rm Epi}_n}
\newcommand{\Epinp}{{\rm Epi}_n^+}
\newcommand{\tree}{ \xymatrix{[r_n] \ar[r]^{f_n} & ...\ar[r]^{f_2}&[r_1]}}
\newcommand{\Tot}{{ \rm Tot}}
\newcommand{\Nat}{{\rm Nat}}
\newcommand{\treeR}{ \xymatrix{[r_n] \ar[r]^{f_n^r} & ...\ar[r]^{f_2^r}&[r_1]}}
\newcommand{\treeS}{ \xymatrix{[s_n] \ar[r]^{f_n^s} & ...\ar[r]^{f_2^s}&[s_1]}}
\newcommand{\Ext}{{ \rm Ext}}
\newcommand{\Tor}{{ \rm Tor}}
\newcommand{\kmod}{k\text{-}\mathrm{mod}}
\newcommand{\sh}{{ \rm sh}}
\newcommand{\op}{\mathrm{op}}
\newcommand{\Epi}{{\rm Epi}}
\begin{document}
\title{$E_n$-cohomology with coefficients as functor cohomology}
\author{Stephanie Ziegenhagen}
\address{Universit\'e Paris 13, Sorbonne Paris Cit\'e, LAGA, CNRS (UMR 7539)\\
 99 Avenue Jean-Baptiste Cl\'ement, 93430 Villetaneuse, France}
\email{ziegenhagen@math.univ-paris13.fr}
\keywords{Functor homology, $E_n$-homology, Iterated bar construction, Hochschild homology, Operads}
\subjclass[2000]{13D03, 55P48, 18G15}

\begin{abstract}
Building on work of Livernet and Richter, we prove that $E_n$-homology and $E_n$-cohomology of a commutative algebra with coefficients in a symmetric bimodule can be interpreted as functor homology and cohomology. Furthermore we show that the associated Yoneda algebra is trivial.
\end{abstract}

\maketitle

\section{Introduction}

The little $n$-cubes operad was introduced to study $n$-fold loop spaces (see \cite{BV73}, \cite{Ma72}). An $E_n$-operad is a $\Sigma_*$-cofibrant operad weakly equivalent to the operad formed by the  singular chains on the little $n$-cubes operad, and algebras over such an operad are called $E_n$-algebras. Those are $A_{\infty}$-algebras which are in addition commutative up to higher homotopies of a certain level depending on $n$. 
For a $\Sigma_*$-cofibrant operad one can define a suitable notion of homology and cohomology of algebras over this operad as a derived functor. For $E_1$-algebras this operadic notion of homology coincides with Hochschild homology. For $E_{\infty}$-algebras one retrieves $\Gamma$-homology as defined by Robinson, see \cite{RW02}. In general, for a commutative algebra viewed as an $E_n$-algebra, $E_n$-homology can be seen to coincide with higher order Hochschild homology as defined in \cite{Pi00}, see \cite{GTZ} and \cite{Z}.

Many notions of homology can be expressed as functor homology. The case of Hochschild homology and cyclic homology has been studied by Birgit Richter and Teimuraz Pirashvili in \cite{PR02}. The same authors give a functor homology interpretation of $\Gamma$-homology in \cite{PR00}. In \cite{HV}, Eric Hoffbeck and Christine Vespa show that Leibniz homology of Lie algebras is functor homology. A more general approach to functor homology for algebras over an operad and their operadic homology is discussed in \cite{FrNote} by Benoit Fresse.

For the case of $E_n$-homology, functor homology interpretations of $E_n$-homology have been given by  Livernet-Richter in \cite{LR11} and Fresse in \cite{Fr}. Both articles are exclusively concerned with the case of trivial coefficients. As proved in \cite{Fr11}, $E_n$-homology with trivial coefficients  coincides up to a suspension with the homology of a generalized iterated bar construction. Muriel Livernet and Birgit Richter use this in \cite{LR11} to prove that $E_n$-homology of a commutative algebra with trivial coefficients can be interpreted as functor homology. Fresse shows in \cite{Fr} that this result can be extended to arbitrary $E_n$-algebras.

Recent work by Benoit Fresse and the author shows that $E_n$-homology and $E_n$-cohomology of a commutative algebra with coefficients in a symmetric bimodule can also be calculated via the iterated bar construction. We show in this article that the  functor homology interpretation of Livernet and Richter can be extended to the case with coefficients, and also holds for cohomology:

\begin{thm} Let $k$ be a commutative unital ring, $A$ a commutative nonunital algebra and $M$ a symmetric $A$-bimodule. There are a small category $\Epinp$ and  functors $b\colon {\Epinp}^{\op} \ra \kmod$ and $\mathcal{L}(A;M)\colon \Epinp \ra \kmod$ such that
$$H^{E_n}_*(A;M) \cong \Tor^{\Epinp}_*(b, \mathcal{L}(A;M)).$$
If $k$ is self-injective, we also have that
$$H_{E_n}^*(A;M) \cong \Ext_{\Epinp}^*(b, \mathcal{L}^{c}(A;M))$$
for a certain functor $\mcL^c(A;M) \colon {\Epinp}^{\op} \ra \kmod$.
\end{thm}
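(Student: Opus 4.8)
The plan is to run the Livernet--Richter strategy \cite{LR11}, now with coefficients switched on, feeding in the recent identification (due to Fresse and the author) of $E_n$-(co)homology with coefficients as the (co)homology of a coefficient-decorated iterated bar construction. Concretely, I would first fix an explicit chain complex $C^{(n)}(A;M)$, obtained from the $n$-fold bar construction of $A$ with a single tensor slot occupied by the symmetric bimodule $M$, such that $H_*\big(C^{(n)}(A;M)\big)\cong H^{E_n}_*(A;M)$ and, dually, the cochain complex $\Hom_k\big(C^{(n)}(A;M),k\big)$ computes $H_{E_n}^*(A;M)$; this is exactly the input unavailable to \cite{LR11}, which only treated $M=0$. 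The category $\Epinp$ is then the coefficient-pointed enhancement of the surjection category $\Epin$ of \cite{LR11}: the underlying tree-indexed combinatorics is unchanged, but morphisms carry the extra pointing (the superscript $+$) recording the position of the distinguished $M$-slot. The functors are the evident decorations: $\mathcal{L}(A;M)\colon\Epinp\ra\kmod$ assigns to an object the tensor product of copies of $A$ with one factor $M$ in the pointed position, $b\colon{\Epinp}^{\op}\ra\kmod$ is the same bottom functor as in the trivial case, and $\mathcal{L}^c(A;M)\colon{\Epinp}^{\op}\ra\kmod$ is the cohomological counterpart built from $\Hom_k(-,M)$.

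For the homology isomorphism I would reproduce the architecture of \cite{LR11}. The key step is to exhibit a projective resolution $P_\bullet\to b$ in $\Fun({\Epinp}^{\op},\kmod)$ assembled from representable functors $k[\Epinp(-,c)]$, and to check that the coend complex $P_\bullet\otimes_{\Epinp}\mathcal{L}(A;M)$ is isomorphic to $C^{(n)}(A;M)$. Since a representable functor satisfies $k[\Epinp(-,c)]\otimes_{\Epinp}\mathcal{L}(A;M)\cong\mathcal{L}(A;M)(c)$ by the co-Yoneda lemma, the coend complex is built objectwise from the values of $\mathcal{L}(A;M)$, and matching its differential with the bar differential (carrying the $M$-slot) is a finite combinatorial verification. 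Because the single $M$-factor does not disturb the tree combinatorics, the resolution $P_\bullet$ of \cite{LR11} stays projective and acyclic, and $\Tor^{\Epinp}_*(b,\mathcal{L}(A;M))=H_*\big(P_\bullet\otimes_{\Epinp}\mathcal{L}(A;M)\big)\cong H_*\big(C^{(n)}(A;M)\big)\cong H^{E_n}_*(A;M)$.

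For the cohomology isomorphism I would apply $\Nat_{{\Epinp}^{\op}}(-,\mathcal{L}^c(A;M))$ to the very same resolution $P_\bullet\to b$, so that by definition $\Ext_{\Epinp}^*(b,\mathcal{L}^c(A;M))=H^*\big(\Nat_{{\Epinp}^{\op}}(P_\bullet,\mathcal{L}^c(A;M))\big)$. For representable $P=k[\Epinp(-,c)]$ the Yoneda lemma gives $\Nat_{{\Epinp}^{\op}}(P,\mathcal{L}^c(A;M))\cong\mathcal{L}^c(A;M)(c)$, so the cochain complex computing $\Ext$ is again assembled objectwise, this time from the values of $\mathcal{L}^c(A;M)$, and the combinatorial comparison identifies it with the cochain complex computing $H_{E_n}^*(A;M)$. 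The self-injectivity hypothesis on $k$ enters precisely here: it guarantees that $\Hom_k(-,k)$ is an exact functor, so that the objectwise $k$-duality relating $\mathcal{L}^c(A;M)$ to $\mathcal{L}(A;M)$ is compatible with passage to cohomology, and the tensor--hom adjunction identifies the $\Ext$ cochain complex with the $k$-dual of the $\Tor$ chain complex without introducing derived-functor correction terms.

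The step I expect to be the main obstacle is the coefficient bookkeeping underlying both comparisons: one must verify that inserting a single $M$-tensor-factor into the tree-indexed products is compatible with all the face and degeneracy maps encoded by morphisms of $\Epinp$, so that $\mathcal{L}(A;M)$ and $\mathcal{L}^c(A;M)$ are genuine functors and the coend (resp. $\Nat$) complex reproduces the bar differential with its iterated-suspension signs. The cohomological half carries the additional subtlety that $k$-linear duality must be handled with care: absent self-injectivity, $\Hom_k(-,k)$ fails to be exact, the identification of $\Ext_{\Epinp}^*(b,\mathcal{L}^c(A;M))$ with the cochain complex for $H_{E_n}^*(A;M)$ acquires $\Ext_k$-correction terms, and the clean isomorphism breaks down — which is exactly why the hypothesis is imposed only for the cohomology statement.
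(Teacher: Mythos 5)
Your proposal follows the same overall architecture as the paper (resolve $b$ by representables built from trees, identify the standard complex, compare with the iterated bar complex of \cite{FZ}), but it has two genuine gaps at exactly the places where the new work lies.

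First, the assertion that ``the resolution $P_\bullet$ of \cite{LR11} stays projective and acyclic'' because the single $M$-factor ``does not disturb the tree combinatorics'' is precisely the statement that needs a new proof, and the intuition behind it is false. The category $\Epinp$ is not $\Epin$ with relabelled morphisms: it contains genuinely new morphisms (those sending leaves to the added basepoint $+$, which model the twisting cochain $\delta$), together with an equivalence relation on morphisms needed to make composition well defined. Consequently the representables $P_t = k[\Epinp(t,-)]$ are strictly larger than their $\Epin$-counterparts, and the complex $\Tot(C^{E_n}(P_t))$ carries the additional differentials $\delta_{\min}$ and $\delta_{\max}$; its acyclicity is not inherited from \cite{LR11}. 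Indeed even the answer changes: over $\Epinp$ one finds $H_0^{E_n}(P_t) \cong \bigoplus_{i \in [r_n]} k$, one copy of $k$ for each leaf, whereas over $\Epin$ the zeroth homology of a representable is $k$ only for the trivial tree. The paper's key lemma establishes this by filtering $\Tot(C^{E_n}(P_t))$ by the number of leaves not sent to $+$, identifying the associated graded pieces with the complexes $\tilde{C}^{E_n}(\Epin(t^I,-))$ for restricted trees $t^I$ (which requires constructing $t^I$, the morphisms $h^I \colon t \ra t^I$, and an explicit isomorphism of complexes), and then running a spectral sequence that collapses by the \cite{LR11} computation. None of this combinatorial content appears in your proposal, and without it there is no resolution to tensor against.

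Second, your functor $b$ is misidentified: it is not ``the same bottom functor as in the trivial case.'' The atomic functor $\tilde b$ of \cite{LR11} (value $k$ on the trivial tree, $0$ elsewhere) gives the wrong $\Tor_0$ over $\Epinp$: the coend $\tilde b \otimes_{\Epinp} F$ quotients $F([0]\ra \cdots \ra [0])$ by the image of each of $F(\delta_0,\id,\dots,\id)$, $F(d_0,\id,\dots,\id)$, $F(\delta_1,\id,\dots,\id)$ separately, whereas $H_0^{E_n}(F)$ is the cokernel of their alternating sum; already for $n=1$ and $F = \mcL(A;M)$ this fails to give Hochschild homology with coefficients. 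The correct $b$ is the cokernel of $(\delta_0,\id,\dots,\id)_* - (d_0,\id,\dots,\id)_* + (\delta_1,\id,\dots,\id)_* \colon P^{[1]\ra[0]\ra\cdots\ra[0]} \ra P^{[0]\ra\cdots\ra[0]}$, a functor that is nonzero on every tree (its dual is the ``set of leaves'' functor). Relatedly, your explanation of the self-injectivity hypothesis is off target: $\mcL^c(A;M)(t) = \Hom_k(A^{\otimes r_n+1},M)$ is not the $k$-dual of $\mcL(A;M)(t) = M \otimes A^{\otimes r_n+1}$, so the cohomological statement is not a formal dualization of the $\Tor$ complex. In the paper the hypothesis enters to show that the duals $P_t^*$ of the representable projectives are injective objects and are $E_n$-cohomology-acyclic (via a universal-coefficient spectral sequence), which is what allows the axiomatic characterization of $\Ext$ to be applied.
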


This implies that there is an action on $E_n$-cohomology by the corresponding Yoneda algebra. We show that this algebra is trivial.

\textbf{Outline:} We give an overview of the constructions of \cite{LR11} in section 2. In section 3 we recall how to calculate $E_n$-homology and -cohomology of commutative algebras with coefficients in a symmetric bimodule via the iterated bar construction. To do this one introduces a twisting differential. In section 4 we enlarge the category defined by Livernet and Richter to incorporate this twisting differential. We define $E_n$-homology and -cohomology for functors from this category to $k$-modules. Finally we show that there are Loday functors linking these notions to the usual notion of $E_n$-homology and -cohomology. We prove our main theorem in section 5. In section 6 we recall the definition of the Yoneda pairing and show that the Yoneda algebra is trivial.

\subsection*{Acknowledgements}
The contents of this article were part of the authors PhD thesis. I am  indebted to my advisor Birgit Richter for numerous ideas and discussions. I would also like to thank Eric Hoffbeck and Marc Lange for many helpful suggestions on how to improve the exposition of this article. Furthermore I would like to gratefully acknowledge support by the DFG.

\subsection*{Conventions} In the following we assume that $1\leq n< \infty$. Let $k$ be a commutative unital ring. We denote by $A$ a commutative nonunital $k$-algebra and by $M$ a symmetric $A$-bimodule. We often view $A$ and $M$ as differential graded $k$-modules concentrated in degree zero. Let $A_+=A \oplus k$ be the unital augmented algebra obtained by adjoining a unit to $A$.
We denote by $sc \in \Sigma C$ the element defined by $c \in C$ in the suspension of a graded $k$-module $C$. The $k$-module $k[X]$ is the free $k$-module generated by a set $X$. For $l \geq 0$ we denote by $[l]$ the set $[l]=\lbrace 0,...,l \rbrace$.


\section{The category $\Epin$ encoding the $n$-fold bar complex}

In \cite{Fr11} Benoit Fresse proves that $E_n$-homology of $E_n$-algebras with trivial coefficients can be computed via the iterated bar complex. Muriel Livernet and Birgit Richter use this in \cite{LR11} to give an interpretation of $E_n$-homology of commutative algebras with trivial coefficients as functor homology. 
They encode the information necessary to define an iterated bar complex in a category $\Epin$ of trees. We recall the construction of this category.

\begin{defn}
Let $C$ be a differential graded nonunital algebra. The bar complex $B(C)$ is the differential graded $k$-module given by
$$B(C) = (\overline T^c(\Sigma C), \partial_B),$$
where $\overline T^c(\Sigma C)$ denotes the reduced tensor coalgebra on $\Sigma C$ equipped with the differential induced by the differential of $C$. The twisting cochain $\partial_B$ is defined by
$$\partial_B([c_1\vert ... \vert c_l]) = \sum_{i=1}^{l-1} (-1)^{i-1}[c_1 \vert ... \vert c_i  c_{i+1} \vert ... \vert c_l].$$ 
Here we use the classical bar notation and denote $sc_1 \otimes ... \otimes sc_l \in (\Sigma  C)^{\otimes l}$ by $[c_1\vert ... \vert c_l]$.
If $C$ is commutative, the shuffle product 
$$ \sh \colon B(C) \otimes B(C) \ra B(C)$$  is defined by
$$\sh([c_1 \vert ... \vert c_j] \otimes [c_{j+1} \vert...\vert c_{j+l}]) 
=\sum_{\sigma \in \sh(j,l)} \pm [c_{\sigma^{-1}(1)} \vert...\vert c_{\sigma^{-1}(j+l)}]$$
with $\sh(j,l) \subset \Sigma_{j+l}$ the set of $(j,l)$-shuffles. For homogeneous elements $c_1,...,c_{j+l}$ the 
summand $[c_{\sigma^{-1}(1)} \vert...\vert c_{\sigma^{-1}(j+l)}]$ is decorated by the graded signature $
(-1)^{\epsilon}$ with $\epsilon= \prod_{i<l, \sigma(i)>\sigma(l)} (\vert c_i\vert +1)(\vert c_l\vert +1)$.
The shuffle product makes $B(C)$ a commutative differential graded $k$-algebra.
\end{defn}

In particular we can iterate this construction and form the $n$-fold bar complex $B^n(A)$. The results in \cite{Fr11} for $E_n$-algebras imply that for any $k$-projective commutative nonunital $k$-algebra $A$
$$H_*^{E_n}(A;k) = H_*(\Sigma^{-n} B^n(A)).$$

Elements in the $n$-fold bar construction $B^n(A)$ correspond to sums of planar fully grown trees with leaves labeled by elements in $ A$, see \cite{FrApp}. We fix some terminology concerning trees.

\begin{defn}
A planar fully grown $n$-level tree $t$ is a sequence
$$t = \tree$$
of order-preserving surjections. The element  $i \in [r_j]$ is called the $i$th vertex of the $j$th level, the elements in $[r_n]$ are also called leaves.
The degree of a tree $t$ is given by the number of its edges, \ie by
$$d(t) = \sum_{j=1}^n (r_j+1).$$
\end{defn}

\begin{defn}
For a given vertex $i \in [r_j]$ the subtree $t_{j,i}$ is the $(n-j)$-level subtree of $t$ with root $i$, \ie
$$
t_{j,i} = \xymatrix{ [\vert f_n^{-1}...f_{j+1}^{-1}(i)\vert -1] \ar[r]^-{g_n}& [\vert f_{n-1}^{-1}...f_{j+1}^{-1}(i)\vert -1] \ar[r]^-{g_{n-1}} & ... \ar[r]^-{g_{j+2}} & [\vert f_{j+1}^{-1}(i)\vert -1]}
$$
with $g_l$ the map making the diagram
$$\xymatrix{[\vert f_{l}^{-1}...f_{j+1}^{-1}(i)\vert -1] \ar[d]^-{\cong} \ar[r]^-{g_l} & [\vert f_{l-1}^{-1}...f_{j+1}^{-1}(i)\vert -1] \ar[d]^-{\cong}\\
f_{l}^{-1}...f_{j+1}^{-1}(i) \ar[r]^-{f_l} & f_{l-1}^{-1}...f_{j+1}^{-1}(i)
}$$
commute. Here the vertical maps are the unique order-preserving bijections.
\end{defn}

\begin{defn}[{\cite[Definition 3.1]{LR11}}] 
The category $\Epin$ has as objects planar fully grown trees with $n$ levels.
A morphism from $\treeR$ to $\treeS$ consists of surjections $h_i \colon [r_i ] \ra [s_i], 1\leq i \leq n$, such that the diagram
$$\xymatrix{
[r_n] \ar[r]^-{f_n^r} \ar[d]^-{h_n}& [r_{n-1}] \ar[r]^-{f_{n-1}^r} \ar[d]^-{h_{n-1}}& ... \ar[r]^-{f_2^r} & [r_1] \ar[d]^-{h_1}\\
[s_n] \ar[r]^-{f_n^s} & [s_{n-1}] \ar[r]^-{f_{n-1}^s} & ... \ar[r]^-{f_2^s} & [s_1]
}$$
commutes and such that $h_i$ is order-preserving on the fibres $(f^r_i)^{-1}(l)$ of $f^r_{i}$ for all $l \in [r_{i}]$. For $i=1$ we require that the map $h_1$ is order-preserving on $[r_1]$. 
The composite of two morphisms $(g_n,...,g_1) \colon t^q \ra t^r$ and $(h_n,...,h_1) \colon t^r \ra t^s$  is given by  $(h_n g_n,...,h_1 g_1).$
\end{defn}

Observe that since $A$ is concentrated in degree zero, the degree of a labeled tree viewed as an element in $B^n(A)$ is given by the number of edges of the tree. Lemma 3.5 in \cite{LR11} says that the maps in $\Epin$ decreasing the number of edges by one are exactly the summands of the differential of $B^n(A)$. This motivates the following definition.

\begin{defn}[{\cite[Definition 3.7]{LR11}}]\label{def:MulticomplexTrivial}
Let $F\colon \Epin \ra \kmod$ be a covariant functor. Let $\tilde{C}^{E_n}(F)$ be the $(\mathbb{N}\cup \lbrace0\rbrace)^n$-graded $k$-module
with 
$$\tilde{C}^{E_n}_{(r_n,...,r_1)} (F) = \bigoplus_{t=\tree} F(t).$$
For $1\leq j \leq n$ let $\tilde{\partial_j} \colon \tilde{C}^{E_n}\ra \tilde{C}^{E_n}$ be the following map lowering the $j$th degree by one:
\begin{itemize} 
\item Let $d_i \colon [r_n] \ra [r_{n}-1]$ be the order-preserving surjection which maps $i$ and $i+1$ to $i$. For $j=n$ define $\tilde{\partial_j}$ restricted to $F(t)$ as
$$\sum_{\substack{0 \leq i < r_n,\\ f_n(i) = f_n (i+1)}} (-1)^{s_{n,i}} F(d_i, \id_{[r_{n-1}]},...,\id_{[r_1]}).$$
\item
Let $1\leq j <n$, $0 \leq i <r_j$ and $\sigma \in \sh(f_{j+1}^{-1}(i), f_{j+1}^{-1}(i+1))$. Let $h=h_{i, \sigma}$ be the unique morphism of trees exhibited in \cite[Lemma 3.5]{LR11} with $h_j=d_i \colon [r_j] \ra [r_{j}-1]$, $h_l=\id$ for $l<j$ and $h_{j+1}$ restricted to $f_{j+1}^{-1}(\lbrace i, i+1 \rbrace)$ acting like $\sigma$.
Then $\tilde{\partial_j}$ is the map which restricted to $F(t)$ equals 
$$\sum_{\substack{0 \leq i < r_j,\\ f_j(i) = f_j(i+1)} }\sum_{\sigma \in \sh(f_{j+1}^{-1}(i), f_{j+1}^{-1}(i+1))}\epsilon(\sigma; t_{j,i}, t_{j, i+1})(-1)^{s_{j,i}} F(h_{i,\sigma}).$$
\end{itemize}

The signs arise from switching the degree $-1$ map $d_i$ with suspensions, as well as from the graded signature of the permutation $\sigma$ in the cases $j<n$.
More precisely, we number the edges in the tree $t$ from bottom to top and from left to right. For example the $2$-level tree $\xymatrix{[2] \ar[r]^-{f_2} & [1]}$ with $f_2(0)=f_2(1) =0$ and $f_2(2)=1$ is decorated as indicated in the following picture.
$$
\setlength{\unitlength}{0.18cm}
\begin{picture}(8, 10)
\put(3, 5){\line(1, -1){4}}
\put(11, 5){\line(-1, -1){4}}

\put(3, 5){\line(-1, 1){4}}
\put(3, 5){\line(1, 1){4}}
\put(11, 5){\line(0, 1){4}}

\put(3,2){$1$}
\put(-1.5,6.5){$2$}
\put(3,6.5){$3$}
\put(10,2){$4$}
\put(12,6.5){$5$}

\end{picture}
$$
Then for $j<n$ we aquire a sign $(-1)^{s_{j,i}}$ where $s_{j,i}$ is the number of the rightmost top edge of the $(n-j)$-level subtree $t_{j,i}$ of $t$. For $j=n$ set $s_{n,i}$ to be the label of the edge whose leaf is the $i$th leaf for $0\leq i \leq n$.

For $j<n$ the map $F(h_{i, \sigma})$ is not only decorated by $(-1)^{s_{j,i}}$ but also by a  sign associated to $\sigma \in \sh(f_{j+1}^{-1}(i), f_{j+1}^{-1}(i+1))$: Let $t_1,...,t_a$ be the $(n-j-1)$-level subtrees of $t$ above the $j$-level vertex $i$, \ie the $(n-j-1)$-level subtrees forming $t_{j,i}$. Similarly let $t_{a+1},...,t_{a+b}$ denote the $(n-j-1)$-level subtrees above $i+1$. Then $\sigma$ determines a shuffle of $\lbrace t_1,...,t_a \rbrace$ and $ \lbrace t_{a+1},...,t_{a+b} \rbrace$. The sign $\epsilon( \sigma;t_{j,i}, t_{j,i+1})$  picks up a factor $(-1)^{(d( t_x) +1)(d(t_y) +1)}$ whenever $x<y$ and $\sigma(x) > \sigma(y)$.
\end{defn}

\begin{lemma}
For any functor $F \colon \Epin \ra \kmod$ the $(\mathbb{N} \cup \lbrace0\rbrace)^n$-graded module $\tilde{C}^{E_n}(F)$ together with $\tilde{\partial_1},...,\tilde{\partial_n}$ forms a multicomplex, which we again denote by $\tilde{C}^{E_n}(F)$.
\end{lemma}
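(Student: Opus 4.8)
The plan is to verify the defining relations of a multicomplex directly, namely that each $\tilde{\partial_j}$ squares to zero and that $\tilde{\partial_i}$ and $\tilde{\partial_j}$ anticommute for $i \neq j$. After decomposing by multidegree these are precisely the relations needed for the total differential $\sum_{j} \tilde{\partial_j}$ to square to zero: since $\tilde{\partial_j}$ lowers only the $j$th degree by one, the operators $\tilde{\partial_j}^2$ and $\tilde{\partial_i}\tilde{\partial_j}$ ($i\ne j$) have pairwise distinct bidegrees, so $(\sum_j \tilde{\partial_j})^2 = 0$ holds if and only if $\tilde{\partial_j}^2 = 0$ and $\tilde{\partial_i}\tilde{\partial_j} + \tilde{\partial_j}\tilde{\partial_i} = 0$.

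The first step is to reduce the whole computation to a purely combinatorial statement about morphisms in $\Epin$, independent of $F$. Each $\tilde{\partial_j}$ is a signed sum of maps $F(h)$ for explicit edge-decreasing morphisms $h=h_{i,\sigma}$, and the signs $(-1)^{s_{j,i}}$ and $\epsilon(\sigma; t_{j,i}, t_{j,i+1})$ depend only on the source tree; hence $\tilde{\partial_j}$ is natural in $F$. By the Yoneda lemma it therefore suffices to check the relations for the representable functors $F = k[\Epin(t, -)]$ on the canonical generator $\mathrm{id}_t$: any $x \in F(t)$ equals $\eta_t(\mathrm{id}_t)$ for the natural transformation $\eta\colon k[\Epin(t,-)] \to F$ classifying $x$, and naturality propagates a relation verified on $\mathrm{id}_t$ to $x$. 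For the representable functor the elements $\tilde{\partial_j}(\mathrm{id}_t)$ are honest signed sums of the morphisms $h_{i,\sigma}$ themselves, and the composition occurring in $\tilde{\partial_{j'}}\tilde{\partial_j}$ is ordinary composition in $\Epin$. Since distinct morphisms form a basis of the free modules involved, the relations amount to a coefficient-wise cancellation: for every morphism $g\colon t \to t''$ the signed sum over all factorizations $g = h_\beta \circ h_\alpha$ contributing to $\tilde{\partial_{j'}}\tilde{\partial_j} \pm \tilde{\partial_j}\tilde{\partial_{j'}}$ must vanish.

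With this reduction I would organize the cancellation by the levels involved. For $\tilde{\partial_n}^2 = 0$ only the contractions $d_i\colon [r_n]\to[r_n-1]$ at the top level occur; the factorizations of a given composite pair up via the codegeneracy-type identities satisfied by the $d_i$ on the fibres of $f_n$, and the signs $(-1)^{s_{n,i}}$ are calibrated so that the two members of each pair cancel, exactly as in the classical proof that the bar differential squares to zero. For $\tilde{\partial_j}^2 = 0$ with $j<n$ the same mechanism applies at level $j$, but now each contraction drags along a shuffle $\sigma$ at level $j+1$; here one uses the multiplicativity of the signs $\epsilon(\sigma; t_{j,i}, t_{j,i+1})$ under composition of shuffles, together with the associativity of concatenating fibres, to build the sign-reversing bijection on factorizations. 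Finally, for the anticommutation $\tilde{\partial_i}\tilde{\partial_j} + \tilde{\partial_j}\tilde{\partial_i} = 0$ with $i\ne j$, the two contractions act on different levels: when $\lvert i-j\rvert \geq 2$ they operate on independent data and commute up to the Koszul sign from interchanging two degree $-1$ maps, so the built-in signs give anticommutation; the adjacent case $\lvert i-j\rvert = 1$ is the only one in which the shuffle data of $\tilde{\partial_j}$ and the contraction of $\tilde{\partial_{j+1}}$ genuinely interact and must be tracked simultaneously.

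I expect the main obstacle to be exactly this sign bookkeeping in the presence of shuffles: proving $\tilde{\partial_j}^2 = 0$ for $j<n$ and the adjacent-level anticommutation requires an explicit sign-reversing involution on the set of factorizations and a check that the graded signatures $\epsilon(\sigma; -,-)$ decompose multiplicatively along the splittings of shuffles that arise. Much of this structure is dictated by Lemma 3.5 of \cite{LR11}, which records that these edge-decreasing morphisms are exactly the summands of the $n$-fold bar differential; the remaining content is to confirm that the cancellations already take place at the level of morphisms in $\Epin$, before any functor $F$ is applied, which is precisely what the representability reduction above makes legitimate.
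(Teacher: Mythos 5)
Your proposal is correct in outline, and it is necessarily a different route from the paper's, because the paper offers no proof of this lemma at all: the lemma is part of the Section 2 review of Livernet--Richter's constructions, and the paper later simply cites \cite[Lemma 3.8]{LR11} for this fact. Your two reductions are both valid. Decomposing by multidegree correctly shows that the multicomplex axioms are equivalent to $\tilde{\partial}_j^2=0$ and $\tilde{\partial}_i\tilde{\partial}_j+\tilde{\partial}_j\tilde{\partial}_i=0$, and the Yoneda step is legitimate: since the signs $(-1)^{s_{j,i}}$ and $\epsilon(\sigma;t_{j,i},t_{j,i+1})$ depend only on the source tree, each $\tilde{\partial}_j$ is natural in $F$, so the relations hold for every $F$ precisely when the corresponding formal signed sums of composites vanish in $k[\Epin(t,t'')]$, i.e.\ when they hold on representables at $\id_t$. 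This cleanly isolates the statement as one about the category $\Epin$ itself, independent of $F$.

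The weakness is that what remains after these reductions is the entire content of the lemma, and there your proposal only names the mechanisms rather than carrying them out: for $j<n$ the ``sign-reversing bijection on factorizations'' and the ``multiplicativity of $\epsilon$ under composition of shuffles'' are exactly the shuffle-product identities (associativity, graded commutativity, and the derivation property of the bar differential) rewritten in tree language, and the adjacent-level case is only flagged as delicate, not treated. Note that your own closing observation gives a way to finish without any sign bookkeeping. A morphism in $\Epin$ is determined by its top component $h_n$, since the lower $h_i$ are forced by the commuting squares and the surjectivity of the structure maps; consequently, if $A$ is the free commutative nonunital $k$-algebra on generators $x_0,\dots,x_{r_n}$, then evaluating the Loday functor $\mcL(A;k)\colon\Epin\ra\kmod$ on $x_0\otimes\cdots\otimes x_{r_n}\in\mcL(A;k)(t)$ sends distinct morphisms out of $t$ to distinct monomial basis elements. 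Hence the formal signed sums in $k[\Epin(t,t'')]$ vanish if and only if the corresponding relations hold in $\Tot(\tilde{C}^{E_n}(\mcL(A;k)))\cong\Sigma^{-n}B^n(A)$, where by \cite[Lemma 3.5]{LR11} the operators $\tilde{\partial}_j$ become the summands of the bar differentials. There the relations are classical: $B(-)$ of a commutative dg algebra is again a commutative dg algebra, so the $n$ bar differentials of the iterated bar construction each square to zero and pairwise anticommute. Combining this with your representability reduction yields a complete proof, and imports the known structure of $B^n(A)$ instead of reproving the shuffle combinatorics by hand.
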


\begin{defn}[{\cite[Definition 3.7]{LR11}}]\label{def:EnHomologyTrivial}
The homology 
$$H^{E_n}_*(F) = H_*(\Tot (\tilde{C}^{E_n}(F)))$$
of the total complex associated to $\tilde{C}^{E_n}(F)$ is called the $E_n$-homology of $F\colon \Epin \ra \kmod$.
\end{defn}

Livernet and Richter show that there is a Loday functor 
$$\mcL(A;k) \colon \Epin \ra \kmod$$ 
associated to every nonunital commutative algebra $A$ such that 
$$H_*^{E_n}(\mcL(A;k)) = H_*^{E_n}(A;k)$$ 
whenever $A$ is $k$-projective.
They then prove that $E_n$-homology of functors is indeed functor homology:

\begin{thm}[{\cite[4.1]{LR11}}]
Let $\tilde{b}\colon \Epin^{\op} \ra \kmod$ be the functor given by 
$$\tilde{b}(t) = \begin{cases}
k, & t= [0] \ra ... \ra [0], \\
0 & \text{else}.
\end{cases}$$
Then for $F \colon \Epin \ra \kmod$ 
$$H_*^{E_n}(F) = \Tor^{\Epin}_*(\tilde{b},F).$$
\end{thm}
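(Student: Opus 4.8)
The plan is to exhibit a projective resolution of $\tilde b$ in the category $\Fun(\Epin^{\op},\kmod)$ whose tensor product over $\Epin$ with $F$ recovers the total complex $\Tot(\tilde C^{E_n}(F))$ computing $H_*^{E_n}(F)$. Concretely, I would set
$$P_N = \bigoplus_{t \colon r_1 + \dots + r_n = N} k[\Epin(-,t)],$$
a direct sum of representable functors indexed by the trees of total degree $N$, and equip $P_\bullet$ with the differential given, on the summand $k[\Epin(-,t)]$, by the same signed sum of post-compositions with the elementary edge-collapsing morphisms $d_i$ (at the top level) and $h_{i,\sigma}$ (at levels $j<n$) that defines $\sum_j \tilde\partial_j$ in Definition \ref{def:MulticomplexTrivial}, with the identical signs $(-1)^{s_{j,i}}$ and $\epsilon(\sigma;t_{j,i},t_{j,i+1})$.

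Each $P_N$ is both projective and flat: by the Yoneda lemma $\Nat(k[\Epin(-,t)],G)\cong G(t)$ and $k[\Epin(-,t)]\otimes_{\Epin}F\cong F(t)$, both exact in the remaining variable. The second identification (co-Yoneda: the coend against a representable is evaluation) is exactly what makes post-composition with a morphism $h$ correspond to $F(h)$, so that $P_\bullet\otimes_{\Epin}F \cong \Tot(\tilde C^{E_n}(F))$ naturally in $F$ as differential graded modules. In particular the relation $\partial_P^2=0$ is the purely combinatorial multicomplex identity among the morphisms $d_i,h_{i,\sigma}$ and their signs established in the Lemma preceding Definition \ref{def:EnHomologyTrivial}, so no separate verification is needed. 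The tree $t_0 = ([0]\ra\dots\ra[0])$ is terminal in $\Epin$ (there is a unique morphism $t\ra t_0$, the tuple of constant surjections, and the fibre conditions are vacuous), whence $P_0=k[\Epin(-,t_0)]$ and the augmentation $\varepsilon\colon P_0\ra \tilde b$ that is the identity at $t_0$ and zero elsewhere is a well-defined natural transformation. Granting that $P_\bullet\ra\tilde b$ is a resolution, flatness of the $P_N$ gives $H_*(P_\bullet\otimes_{\Epin}F)=\Tor^{\Epin}_*(\tilde b,F)$, and the identification above then finishes the proof.

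The crux, and the step I expect to be the main obstacle, is the acyclicity of the augmented complex $P_\bullet\ra\tilde b$. Exactness of a sequence of functors is detected objectwise, so it suffices to fix a tree $t$ and show that the bounded complex of $k$-modules $P_\bullet(t)=\bigoplus_{t'}k[\Epin(t,t')]$, augmented to $\tilde b(t)$, is exact; here $P_N(t)=0$ for $N>r_1+\dots+r_n$, since $\Epin(t,t')\neq\emptyset$ forces $t'$ to be a quotient shape of $t$. I would prove this by inducting on the number of levels $n$: filter $P_\bullet(t)$ by the shape $(r'_1,\dots,r'_{n-1})$ of the image at the lower $n-1$ levels, so that the associated graded differential is the top-level piece $\tilde\partial_n$, a reduced (normalized) bar-type differential on the leaves. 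This piece carries a classical extra degeneracy — adjoining a leaf — providing an explicit contracting homotopy and collapsing the $E^0$-page, after which the induction hypothesis handles the residual differential coming from the lower $n-1$ levels, which is a copy of the level-$(n-1)$ complex.

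The delicate point throughout this last step is the sign and shuffle bookkeeping: the homotopy must be compatible with the graded signatures $\epsilon(\sigma;t_{j,i},t_{j,i+1})$ and the edge-numbering signs $(-1)^{s_{j,i}}$, and one must check that the extra degeneracy anticommutes correctly with the collapsing maps at the interface between two adjacent levels. Verifying that these signs conspire so that the filtration differentials genuinely assemble into the claimed spectral sequence — equivalently, constructing a single global contracting homotopy on $P_\bullet(t)$ by hand — is where essentially all the combinatorial work resides; everything else is formal functor-homology bookkeeping.
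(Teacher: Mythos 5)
Your outer framework is sound, and it is a genuinely different packaging from the one used for this statement: the paper does not reprove the theorem (it quotes it from \cite{LR11}), and both Livernet--Richter and this paper's own Section 5 analogue for $\Epinp$ argue via the axiomatic characterization of $\Tor$ --- long exact sequences, vanishing on projectives, and identification of $H_0$ with $\tilde b\otimes_{\Epin}-$ --- whereas you propose an explicit complex of representables resolving $\tilde b$, plus co-Yoneda. The formal layer of your proposal is correct: the identification $P_\bullet\otimes_{\Epin}F\cong\Tot(\tilde{C}^{E_n}(F))$, projectivity and flatness of representables, $\partial_P^2=0$ from the multicomplex lemma, terminality of $t_0$, and the augmentation. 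But be aware that the two routes are not merely parallel; they pivot on the \emph{identical} computation. Evaluating your complex at a tree $t$ gives $P_\bullet(t)\cong\Tot\bigl(\tilde{C}^{E_n}(k[\Epin(t,-)])\bigr)$, since under co-Yoneda post-composition becomes exactly the structure maps of the covariant representable. So your crux, exactness of $P_\bullet\ra\tilde b$, is word-for-word the statement that $H_*^{E_n}(k[\Epin(t,-)])$ vanishes in positive degrees and equals $\tilde b(t)$ in degree $0$ --- the key lemma of \cite[Section 4]{LR11}, and the same input that the present paper's Section 5 spectral sequence argument reduces to.

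That lemma is precisely what you do not prove, so there is a genuine gap, and it is not routine bookkeeping. Two concrete problems with your sketch. First, since your differential is post-composition and no morphism of $\Epin$ raises the number of edges, a contracting homotopy cannot itself be given by post-composition; ``adjoining a leaf'' must be constructed by hand on basis elements (tuples of surjections, order-preserving on fibres, compatible across levels) and verified to be well defined --- this is not the classical extra degeneracy of a bar construction, because there is no simplicial object here supplying it. Second, the claim that the homotopy ``collapses the $E^0$-page'' is false as stated: the $\tilde\partial_n$-homology of the associated graded does not vanish identically. Already for $n=2$ and $t=([1]\xrightarrow{\id}[1])$, the top-level homology of the lower-degree-zero part is one-dimensional, spanned by the antisymmetric shuffle class (the combination $[u\vert v]-[v\vert u]$, an exterior-algebra phenomenon created by the signs $\epsilon(\sigma;t_{j,i},t_{j,i+1})$), and it is only the $d^1$-differential coming from the lower level that kills it. In general the top-level piece is acyclic exactly on summands where some source fibre has arity at least two, while the surviving shuffle-antisymmetric classes must be identified, signs included, with the complex of representables over $\Epi_{n-1}$ for the truncated tree in order to run your induction on $n$. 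Producing that identification is the mathematical content of the theorem; as written, your proposal reformulates the statement into this difficulty and stops there.
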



\section{$E_n$-homology with coefficients via the iterated bar complex}

Recent work by Fresse and the author (see \cite{FZ}) shows that, at least for a commutative nonunital $k$-algebra $A$ and a symmetric $A$-bimodule $M$, the iterated bar complex can also be used to calculate $E_n$-homology and -cohomology with coefficients. 
In order to incorporate the action of $A$ on $M$ one has to add a twisting cochain 
$$\delta \colon A_+ \otimes B^n(A) \ra A_+ \otimes B^n(A)$$ to the complex $A_+ \otimes B^n(A)$.

\begin{defn}
Let $t(a_0,...,a_{r_n})$ denote the element in $B^n(A)$ defined by the $n$-level tree $t= \tree$ with leaves labeled by $a_0,...,a_{r_n} \in A$. The twisting morphism $\delta \colon A_+ \otimes B^n(A) \ra A_+ \otimes B^n(A)$ is given by
\begin{eqnarray*}
\delta(a \otimes t(a_0,...,a_{r_n})) & = & \sum_{\substack{0\leq l \leq r_{n-1},\\ \vert f_n^{-1}(l)\vert >1,\\ x=\min f_n^{-1}(l)}} (-1)^{s_{n,x}-1} a a_x \otimes (t \setminus x)(a_0,...,\hat a_x,...,a_{r_n})  \\
&+& \sum_{\substack{0 \leq l \leq r_{n-1}\\ \vert f_{n}^{-1}(l)\vert >1,\\ y=\max f_n^{-1}(l)}} (-1)^{s_{n,y}}a_y a\otimes (t \setminus y)(a_0,...,\hat a_y,...a_{r_n})
\end{eqnarray*}
for $a\in A_+$.
Here for $s \in [r_n]$ such that $s$ is not the only element in the corresponding $1$-fibre of $t$ containing $s$, we let $t\setminus s$ be the tree obtained by deleting the leaf $s$. To be more precise, 
$$t\setminus s = \xymatrix{[r_n-1] \ar[r]^-{f_n'} & [r_{n-1}] \ar[r]^-{f_{n-1}} & ... \ar[r]^-{f_2}  & [r_1]}$$
with 
$$f_n'(x) = \begin{cases}
f_n(x), & x< s,\\
f_n(x+1), & x\geq s.
\end{cases}$$
The sign $(-1)^{s_{n,i}}$ is as in Definition \ref{def:MulticomplexTrivial}.
\end{defn}

\begin{remark}
\begin{enumerate}
\item
Intuitively the map $\delta$ deletes leaves and acts with the corresponding label on the coefficient module $A_+$. The leaves which are deleted are either on the left or on the right of a $1$-fibre of the tree. For $n=1$ compare this to the complex calculating Hochschild homology $HH(A;A_+)$:  
The standard differential maps $a \otimes a_0 \otimes ... \otimes a_l\in A_+ \otimes A^{\otimes l+1}$ to
\begin{eqnarray*}
&&a a_0 \otimes a_1 \otimes ... \otimes a_l + (-1)^{l+1}a_l a \otimes a_0 \otimes ... \otimes a_{l-1}\\
& +& \sum_{i=0}^{l-1} (-1)^{i+1}a \otimes a_0 \otimes ... \otimes a_i a_{i+1} \otimes ... \otimes a_l.
\end{eqnarray*} 
The first two summands correspond to the twist $\delta$, while the other summands correspond to $\partial_B$.
\item The map $\delta$ only considers $1$-fibres of arity at least two because on $1$-fibres of arity one the two summands in the definition cancel each other out: Since $A_+$ is commutative, multiplying $a\in A_+$ with $a_i \in A$ from the left equals multiplying with $a_i$ from the right.
\end{enumerate}
\end{remark}

In section \ref{sec:Epinp} we will define $E_n$-homology and $E_n$-cohomology of functors defined on a category which extends the category $\Epin$. The following theorem will allow us to argue in Remark \ref{rem:LodayHom} and Remark \ref{rem:LodayCo} that $E_n$-homology and $E_n$-cohomology of functors encompass $E_n$-homology and $E_n$-cohomology of commutative algebras with coefficients in a symmetric bimodule.

\begin{thm} \cite{FZ}\label{thm:FZ}
For a commutative $k$-projective nonunital $k$-algebra $A$ and a symmetric $A$-bimodule $M$ we have
$$H_*^{E_n}(A;M) = H_*(M \otimes_{A_+} (A_+ \otimes \Sigma^{-n}B^n(A), \delta))
$$
and
$$H^*_{E_n}(A;M) = H^*(\Hom_{A_+}((A_+ \otimes \Sigma^{-n} B^n(A),\delta), M).$$
\end{thm}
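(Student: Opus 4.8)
The plan is to exhibit the twisted complex $(A_+ \otimes \Sigma^{-n} B^n(A), \partial + \delta)$, where $\partial$ denotes the internal differential of the $n$-fold bar complex, as an explicit termwise $A_+$-projective model for the operadic cotangent complex of $A$, and then to read off both formulae by base change along $M$. Recall that for a $\Sigma_*$-cofibrant model of the $E_n$-operad, the $E_n$-homology and $E_n$-cohomology of $A$ with coefficients in a representation $N$ are the operadic André--Quillen homology and cohomology, i.e. the derived functors, over the enveloping algebra $U_{E_n}(A)$, of indecomposables and of derivations; writing $L_{E_n}(A)$ for the cotangent complex, this reads $H_*^{E_n}(A;N) = H_*(N \otimes^{\mathbb L}_{U_{E_n}(A)} L_{E_n}(A))$ and $H^*_{E_n}(A;N) = H^*(R\Hom_{U_{E_n}(A)}(L_{E_n}(A), N))$. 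The first step is to verify that when the commutative algebra $A$ is regarded as an $E_n$-algebra the enveloping algebra acts on a symmetric bimodule through the commutative multiplication alone, so that the coefficient category is that of $A_+$-modules and $U_{E_n}(A)$ may be replaced by $A_+$ throughout. This reduces the theorem to producing an $A_+$-module model of $L_{E_n}(A)$ with underlying graded module $A_+ \otimes \Sigma^{-n} B^n(A)$ and differential $\partial + \delta$.

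To build this model I would take Fresse's identification $H_*^{E_n}(A;k) = H_*(\Sigma^{-n} B^n(A))$ as starting point: it says precisely that $\Sigma^{-n} B^n(A)$ computes the derived indecomposables $k \otimes^{\mathbb L}_{A_+} L_{E_n}(A)$. The task is then to lift this trivial-coefficient complex to the level of $A_+$-modules. As a graded $A_+$-module one takes $A_+ \otimes_k \Sigma^{-n} B^n(A)$, which is termwise $A_+$-projective because $A$, and hence $B^n(A)$, is $k$-projective. The internal differential $\partial$ supplies the differential of the $n$-fold bar complex, while the $A_+$-linear correction that promotes it to the cotangent-complex differential is precisely $\delta$, which deletes the outermost leaf of a top-level $1$-fibre and multiplies its label into the coefficient factor, mirroring the outer terms $a a_0$ and $\pm a_l a$ of the Hochschild differential recalled in the Remark. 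One then checks directly that $\partial + \delta$ is a differential, that is, $\delta^2 = 0$ and $\partial \delta + \delta \partial = 0$; this is the $n=1$ Hochschild computation carried out in parallel over all $n$ levels, the signs being matched against the edge-numbering convention of Definition \ref{def:MulticomplexTrivial} and the shuffle signs $\epsilon(\sigma; t_{j,i}, t_{j,i+1})$.

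Granting that $C := (A_+ \otimes \Sigma^{-n} B^n(A), \partial + \delta)$ models $L_{E_n}(A)$, both formulae follow formally. As $C$ is termwise $A_+$-projective, applying $M \otimes_{A_+}(-)$ computes $M \otimes^{\mathbb L}_{A_+} L_{E_n}(A)$, which by the reduction above is $H_*^{E_n}(A;M)$; dually $\Hom_{A_+}(C, M)$ computes $R\Hom_{A_+}(L_{E_n}(A), M) = H^*_{E_n}(A;M)$, and no injectivity hypothesis on $k$ is needed at this stage precisely because $C$ is already degreewise $A_+$-projective, so that $\Hom_{A_+}(C, M)$ need not be further derived. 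It is the symmetry of the bimodule $M$ that ensures the single $A_+$-action appearing in $\delta$ records its full module structure.

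The principal obstacle is the claim that $C$ models the cotangent complex \emph{as an $A_+$-module}, and not merely that it recovers the right homology after forgetting the $A_+$-action. Proving this amounts to matching the leaf-deletion-and-multiply operations of $\delta$ with the enveloping-algebra action arising from the $E_n$-structure, with full control of signs and suspensions. I would approach it through the two-step filtration $A \otimes \Sigma^{-n} B^n(A) \subset C$ induced by $A_+ = k \oplus A$: the subcomplex is stable under $\partial + \delta$ because $\delta$ always lands in the $A$-component, while the quotient is the trivial-coefficient complex $(\Sigma^{-n} B^n(A), \partial)$ on which $\delta$ induces zero. Comparing the resulting long exact sequence with Fresse's computation, together with an $A_+$-linearity and comparison-of-models argument identifying the boundary map with the action term of the cotangent complex, is what pins down the quasi-isomorphism type of $C$ over $A_+$ and thereby yields the theorem.
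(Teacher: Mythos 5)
First, note that the paper you were given does not prove this statement at all: it is imported verbatim from \cite{FZ}, so your attempt has to be measured against the proof given there, which is carried out with the machinery of right modules over operads. Your formal reductions are fine: $E_n$-(co)homology with coefficients is operadic Andr\'e--Quillen (co)homology, a symmetric $A$-bimodule is a representation via the map $U_{E_n}(A) \ra A_+$ to the commutative enveloping algebra, derived base change along this map reduces everything to identifying $A_+ \otimes^{\mathbb{L}}_{U_{E_n}(A)} L_{E_n}(A)$ with the twisted complex $C = (A_+ \otimes \Sigma^{-n}B^n(A), \partial + \delta)$ as an $A_+$-module complex, and once that identification is available both formulas follow since $C$ is a bounded-below complex of projective $A_+$-modules (you are also right that no self-injectivity of $k$ is needed for this theorem; that hypothesis only enters the functor-cohomology statements later in the paper).

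The gap is the identification itself, which you correctly single out as ``the principal obstacle'' but then do not close. Your filtration $A \otimes \Sigma^{-n}B^n(A) \subset C$ with quotient $(\Sigma^{-n}B^n(A), \partial)$ is correct as stated, but the resulting long exact sequence is circular: the subcomplex is exactly $A \otimes_{A_+} C$, so its homology is the (conjectural) group $H_*^{E_n}(A;A)$ --- an instance of the very theorem being proved --- while the total complex is the (conjectural) $H_*^{E_n}(A;A_+)$. Fresse's trivial-coefficient theorem computes only the quotient arm, leaving two unknown arms, so neither the five lemma nor any induction can get started. Worse, even if one knew all three homology groups, graded isomorphisms of homology do not determine the quasi-isomorphism type of $C$ \emph{as an $A_+$-module complex}, which is what is required to conclude for an arbitrary coefficient module $M$ on both the $\Tor$ and the $\Ext$ side; one must construct an actual $A_+$-linear (indeed functorial) comparison zig-zag between $C$ and a genuine model of the base-changed cotangent complex. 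Producing that map is the real content of \cite{FZ}: there the iterated bar construction is promoted, following \cite{Fr11}, to a quasi-free right module over the operad, the twist $\delta$ arises from that module structure, and the comparison with the cotangent complex is built and verified in the category of right modules, where quasi-freeness allows the map to be constructed inductively and checked to be a weak equivalence, before being evaluated on $A$. Nothing in your outline produces such a map, so the proposal does not yet prove the theorem.
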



\section{The category $\Epinp$ encoding the $n$-fold bar complex with coefficients}\label{sec:Epinp}

We would like to establish a functor homology interpretation for $E_n$-homology of a commutative algebra $A$ with coefficients in a symmetric $A$-bimodule $M$ as well as for $E_n$-cohomology.  
To model $E_n$-homology with coefficients as functor homology we  have to enlarge the category $\Epin$ to incorporate the summands of the twisting cochain $\delta$.

\begin{defn}\label{defn:Epinp}
The objects of the category $\Epinp$ are given by planar fully grown trees with $n$ levels. A  morphism from $t^r=\treeR$ to $t^s=\treeS$ is represented by a sequence of maps $(h_n,...,h_1)$, where
\begin{itemize}
\item  for $i=2,...,n-1$, the map $h_i\colon [r_i]\rightarrow [s_i]$ is a  surjection which is order-preserving on the fibres $f_i^{-1} (l)$ for all $l \in [r_{i-1}]$. For $i=1$ we require $h_1 \colon [r_1 ]\ra [s_1]$ to be order-preserving. 
\item The map $h_n$ is a map 
\[
h_n\colon [r_n] \rightarrow [s_n]_+ := [s_n] \sqcup \lbrace + \rbrace
\]
such that $[s_n]$ lies in the image of $h_n$. We also require that the restriction of $h_n$ to $h_n^{-1}([s_n])$ is order-preserving on the fibres of $f_n$. Furthermore the intersection of  $h_n^{-1}([s_n])$ with a fibre $f_n^{-1}(l)$ has to be a (potentially empty) interval for all $l \in [r_{n-1}]$, \ie is of the form $\lbrace a, a+1,..., a+b \rbrace$ with $b\geq -1$. \\
\item The diagram 
\[
\xymatrix{
h_n^{-1}([s_n]) \ar[r]^{f^r_n} \ar[d]^{h_n} & [r_{n-1}]\ar[d]^{h_{n-1}} \ar[r] & ... \ar[r] & [r_2] \ar[d]^{h_2}\ar[r]^{f^r_1} &[r_1] \ar[d]^{h_1}\\
[s_n] \ar[r]^{f^s_n} & [s_{n-1}] \ar[r] & ... \ar[r] & [s_2] \ar[r]^{f^s_1} &[s_1]
}
\]
commutes.
\end{itemize}
Finally we identify certain morphisms by imposing the following equivalence relation on the set of morphisms from $t^r$ to $t^s$: We identify morphisms $h$ and $h'$ if
\begin{itemize}
\item ${h_n}^{-1}( +) = {h'_n}^{-1} (+)$ and
\item for all $1\leq i \leq n$ the maps $h_i $ and  $h'_i$ coincide if restricted to $f_{i+1}^r ... f_n^r([r_n]\setminus h_n^{-1}(+))$.
\end{itemize}

The composition of two morphism $(g_n,...,g_1)\colon t^q \ra t^r$ and $(h_n,...,h_1)\colon t^r \ra t^s$ is defined by composing componentwise and sending $+$ to $+$, \ie
$$(h_n,...,h_1) \circ (g_n,...,g_1) := ((hg)_n, h_{n-1} g_{n-1},...,h_1 g_1)$$
with $(hg)_n(x) = \begin{cases} +, & g_n(x) = +,\\ h_ng_n(x) & \text{otherwise.} \end{cases}$
\end{defn}

A straightforward calculation shows that composition in $\Epinp$ is well defined and associative.

\begin{remark} \label{rem:added morphisms}
\begin{enumerate}
\item It is clear that $\Epin$ is a subcategory of $\Epinp$ and that both categories share the same objects. Let $\delta_i\colon[r_n]\ra [r_{n}-1]$ be the map 
$$\delta_i(x)= \begin{cases} x, & x <i,\\
+, & x=i,\\
x-1, & x>i.
\end{cases}$$
Intuitively the category $\Epinp$  is built from $\Epin$ by adding morphisms of the form
$$\xymatrix{
[r_n]\ar[r]^{f_n}\ar[d]_{\delta_i}& [r_{n-1}] \ar[d]_{\id} \ar[r]^{f_{n-1}}&... \ar[r]^{f_2} & [r_1]\ar[d]_{\id}\\
[r_n-1] \ar[r]^{\hat{f}_n} & [r_{n-1}] \ar[r]^{f_{n-1}} &... \ar[r]^{f_2} & [r_1]
}$$
with  $i$ the minimal or maximal element of a fibre $f_n^{-1}(l)$ containing at least two elements. Here 
$$\hat{f}_n(x) = \begin{cases} f_n(x), & x <i,\\
f_n(x+1), & x\geq i.
\end{cases}$$
The requirement that the elements of a fibre of $f_n$ that are not mapped to $+$ form an interval reflects the fact that we have only added morphisms of the aforementioned kind.
\item We only added morphism $(\delta_i, \id,...,\id) \colon t \ra t'$ such that $i$ is not the only element in the corresponding $1$-fibre of $t$. Nevertheless it is possible to map $1$-fibres of arity one to $+$ by first applying maps which merge edges in lower levels. For example, the map 
$$
\xymatrix{[1] \ar[r]^-{\id} \ar[d]_-{\substack{0 \mapsto +,\\ 1 \mapsto 0}} & [1] \ar[d]^-{d_0} \\
[0]\ar[r]^-{\id} &[0]}
$$
arises as the composite of the maps
$$
\xymatrix{[1] \ar[r]^-{\id} \ar[d]_-{\id} & [1] \ar[d]^-{d_0} \\
[1]\ar[r]^-{0,1 \mapsto 0} &[0]}
\quad \quad \text{and} \quad \quad
\xymatrix{[1] \ar[r]^-{0,1 \mapsto 0} \ar[d]_-{\delta_0} & [0] \ar[d]^-{\id} \\
[0]\ar[r]^-{\id} &[0]}
$$
\item The motivation for defining $\Epinp$ is to model the complex calculating $E_n$-homology of $A$ with coefficients in $M$. Hence imposing the above equivalence relation on the set of morphisms is necessary: It should not matter what precisely happens to a subtree of a tree $t$ if all its leaves get mapped to $+$, \ie in which order and on what side of an element we act on with  a family of elements of  $A$.
\end{enumerate}
\end{remark}

After defining the category $\Epinp$ which also models the summands of the twisting cochain $\delta$, we can proceed to define $E_n$-homology of a functor.

\begin{defn}\label{def:FunktorHomWithCoeffsDiffs}
Let $F\colon \Epinp \rightarrow \kmod$ be a functor. As in Definition \ref{def:MulticomplexTrivial} set 
$$C^{E_n}_{r_n,...,r_1}(F):=  \bigoplus_{t=[r_n] \rightarrow ... \rightarrow [r_1]} F(t).$$
Define maps $\partial_j\colon C^{E_n}_{r_n,...,r_j,...,r_1} \ra C^{E_n}_{r_n,...,r_j-1,...,r_1}$ lowering the $j$th degree by one
by
$$\partial_j = \tilde{\partial}_j \quad \text{for } i<n \quad \quad\text{and} \quad \quad
\partial_n = \tilde{\partial}_n + \delta_{\min} + \delta_{\max},$$ 
with 
$$\delta_{\min}= \sum_{\substack{0\leq l \leq r_{n-1},\\ 
\vert f_n^{-1}(l)\vert >1}} (-1)^{s_{n,\min f_n^{-1}(l)}-1}F(\delta_{\min f_n^{-1}(l)}, \id,...,\id) $$
and
$$\delta_{\max}= \sum_{\substack{0 \leq l \leq r_{n-1}\\ \vert f_{n}^{-1}(l)\vert >1}} (-1)^{s_{n,\max f_n^{-1}(l)}}F(\delta_{\max f_n^{-1}(l)}, \id,...,\id).$$
The integers $s_{n,i}$ are as in Definition \ref{def:MulticomplexTrivial}.

\end{defn}

\begin{example}
Let $t$ be the $2$-level tree

\[
\setlength{\unitlength}{0.18cm}
\begin{picture}(8, 10)
\put(3, 5){\line(1, -1){4}}
\put(7, 5){\line(0, -1){4}}
\put(11, 5){\line(-1, -1){4}}

\put(0, 9){\line(3, -4){3}}
\put(3, 9){\line(0, -1){4}}
\put(6, 9){\line(-3, -4){3}}
\put(-0.5, 10){$0 $}
\put(2.5, 10){$1$}
\put(5.5, 10){$2$}

\put(7, 5){\line(0, 1){4}}
\put(7, 10){$3$}

\put(9, 9){\line(1, -2){2}}
\put(13, 9){\line(-1, -2){2}}
\put(8.5, 10){$4 $}
\put(12.5, 10){$5$}

\end{picture}
\]
Then $\delta_{\min}$ is the sum of the morphism induced by mapping the leaf labeled $0$ to $+$, equipped with the sign $(-1)^1$, and the morphism induced by mapping $4$ to $+$, decorated by $(-1)^7$. The map $\delta_{max}$ is induced by sending $2$ to $+$ with sign $(-1)^4$ and by mapping $5$ to $+$ which yields the sign $(-1)^9$.
\end{example}

We already know from \cite[Lemma 3.8]{LR11} that $(C^{E_n}, \tilde{\partial}_1,...,\tilde{\partial}_n)$ is a multicomplex. Hence it suffices to prove the following lemma, which can be done via a tedious, but straightforward calculation, see \cite[Lemma 4.14]{Z}.

\begin{lemma}\label{lemma:checkchaincomplex}
Let $F\colon \Epinp \ra \kmod$. The maps defined above satisfy the identities
\begin{eqnarray*}
(\delta_{\min} + \delta_{\max}) \partial_j + \partial_j (\delta_{\min} + \delta_{\max})&= &0 \quad \text{for all }j<n,\\
(\delta_{\min}+\delta_{\max})^2 + \tilde{\partial}_n (\delta_{\min} +\delta_{\max}) + \tilde{\partial}_n(\delta_{\min}+\delta_{\max})&= &0.
\end{eqnarray*}
Hence $C^{E_n}(F)$ is a multicomplex. 
\end{lemma}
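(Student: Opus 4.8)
The plan is to reduce the claim that $C^{E_n}(F)$ is a multicomplex to the two displayed identities, and then to verify those identities by evaluating both sides on $F(t)$ for an arbitrary object $t$ and matching the resulting summands one by one. Since \cite[Lemma 3.8]{LR11} already gives $\tilde{\partial}_i \tilde{\partial}_j + \tilde{\partial}_j \tilde{\partial}_i = 0$ for all $i,j$, and since $\partial_j = \tilde{\partial}_j$ for $j<n$ while $\partial_n = \tilde{\partial}_n + \delta_{\min} + \delta_{\max}$, the only relations not already known are $\partial_n \partial_j + \partial_j \partial_n = 0$ for $j<n$ and $\partial_n^2 = 0$. Expanding these and cancelling the terms supplied by the old multicomplex structure (in particular $\tilde{\partial}_n^2 = 0$), they reduce precisely to
$$(\delta_{\min}+\delta_{\max})\tilde{\partial}_j + \tilde{\partial}_j (\delta_{\min}+\delta_{\max}) = 0 \qquad (j<n)$$
and to $(\delta_{\min}+\delta_{\max})^2 + \tilde{\partial}_n(\delta_{\min}+\delta_{\max}) + (\delta_{\min}+\delta_{\max})\tilde{\partial}_n = 0$, which is the second displayed identity of the lemma once its two cross-terms are read as $\tilde{\partial}_n(\delta_{\min}+\delta_{\max})$ and $(\delta_{\min}+\delta_{\max})\tilde{\partial}_n$. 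Thus the stated identities are exactly what must be added to the known structure.

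For the first identity I would argue that $\tilde{\partial}_j$, which merges two adjacent level-$j$ vertices together with a shuffle of the subtrees above them, and $\delta_{\min}/\delta_{\max}$, which delete the minimal or maximal leaf of a level-$n$ fibre, act on essentially disjoint pieces of the combinatorial data. Concretely, for a fixed merge at level $j$ and a fixed leaf deletion I would check that the two composite morphisms of $\Epinp$ obtained by performing the operations in the two possible orders agree as morphisms, so that the corresponding summands $F(\cdot)$ coincide. The whole content is then a sign comparison: I would track how the edge-numbering signs $s_{n,\cdot}$ and $s_{j,\cdot}$ and the shuffle sign $\epsilon(\sigma; t_{j,i}, t_{j,i+1})$ transform when the other operation is applied first, and verify that switching the order always contributes a global factor $-1$. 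The subtlety is that shuffling subtrees at level $j$ relabels the leaves and hence can alter which leaf is extreme in a given $1$-fibre and shifts the edge numbers entering $s_{n,\cdot}$; but since the shuffle preserves the order inside each block, these changes are compatible with the deletion and the signs reorganise as required.

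The second identity is the genuine heart of the computation: it is the $n$-fold bar analogue of the fact that the Hochschild boundary with coefficients squares to zero, already indicated for $n=1$ in the Remark following the definition of $\delta$. I would organise the verification around a single $1$-fibre $f_n^{-1}(l)$ of arity $\geq 2$, treating $\delta_{\min}, \delta_{\max}$ (deletions of extreme leaves, i.e. the module action) and the relevant summands of $\tilde{\partial}_n$ (merges of two adjacent leaves, i.e. the multiplication) as the face operators of a simplicial-type structure. Here the equivalence relation of Definition \ref{defn:Epinp} is crucial, since by Remark \ref{rem:added morphisms}(c) it renders immaterial the side and the order in which leaves are sent to $+$; this is exactly what allows summands obtained by deleting two leaves in opposite orders to be identified, and hence to cancel in pairs once the signs are accounted for. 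The summands of $(\delta_{\min}+\delta_{\max})^2$ left uncancelled by this pairing are those in which the two deletions interfere at the boundary of the fibre, and I would then check that these are matched, with opposite sign and summand for summand, by the uncancelled part of the cross-anticommutator $\tilde{\partial}_n(\delta_{\min}+\delta_{\max}) + (\delta_{\min}+\delta_{\max})\tilde{\partial}_n$, so that the total vanishes.

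The main obstacle is the sign bookkeeping rather than the underlying combinatorics. The integers $s_{n,i}$ are defined through a global numbering of the edges of $t$ from bottom to top and left to right, so both deleting a leaf and merging vertices shift the numbering of many edges at once; keeping track of how $(-1)^{s_{n,\min}-1}$ and $(-1)^{s_{n,\max}}$ transform, and how they interact with the shuffle signatures $\epsilon$, is what makes the calculation lengthy. I would first carry out the case $n=1$, where there are no shuffles and the identity is literally the statement that the complex computing $HH(A;A_+)$ is a chain complex, in order to isolate the sign mechanism cleanly; the general case then follows because the lower-level merges $\tilde{\partial}_j$ ($j<n$) and the subtree shuffles only rearrange the data below and above level $n$ in an order-preserving way within blocks, interacting with the top-level leaf operations through the controlled sign changes identified above. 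Reducing each identity to a single $1$-fibre (and, for the first identity, to a single adjacent pair of level-$j$ vertices) keeps the number of cases finite and renders the verification, while tedious, entirely mechanical; combining both identities with the already-established relations among $\tilde{\partial}_1,\dots,\tilde{\partial}_n$ then yields that $C^{E_n}(F)$ is a multicomplex.
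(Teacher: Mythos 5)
Your proposal is correct and takes essentially the same approach as the paper: reduce to the two displayed identities via [LR11, Lemma 3.8] (reading the second cross-term as $(\delta_{\min}+\delta_{\max})\tilde{\partial}_n$, which silently corrects the paper's typo), and then verify them by a direct combinatorial and sign computation --- a computation the paper itself does not carry out but dismisses as ``tedious, but straightforward'' and defers to the author's thesis [Z, Lemma 4.14]. Your outline of that verification (matching composite morphisms pairwise via the equivalence relation of Definition \ref{defn:Epinp}, the internal cancellations this permits, the reduction to single fibres, and the bookkeeping of the signs $s_{n,\cdot}$ and $\epsilon$) identifies exactly the mechanisms such a computation rests on, so it is consistent with, and indeed more detailed than, what the paper records.
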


\begin{defn}
Let $F \colon \Epinp \ra \kmod$ be a functor. The $E_n$-homology of $F$ is
$$H^{E_n}_*(F) = H_*(\Tot (C^{E_n}(F))).$$
\end{defn}

\begin{remark}
Given a functor $\tilde F \colon \Epin \ra \kmod$, we can extend $\tilde F$ to $F \colon \Epinp \ra \kmod$ by setting $F(h) = 0$ for every morphism $h \colon t^r \ra t^s$ in $\Epinp$ such that $h([r_n]) \cap \lbrace + \rbrace \neq \emptyset.$ With these definitions $H^{E_n}(F)$ coincides with the $E_n$-homology of $\tilde F$ as defined in Definition \ref{def:EnHomologyTrivial}. In this sense the definition of $E_n$-homology we just gave extends the definition given in \cite[Definition 3.7]{LR11}.
\end{remark}

We are specifically interested in calculating $E_n$-homology of commutative algebras, which is the $E_n$-homology of the following functors. 

\begin{remark}\label{rem:LodayHom}
The Loday functor $\mcL (A;M)\colon \Epinp \rightarrow \kmod$ 
is the following functor: For a given tree $t=\tree$ set
$$ \mcL(A;M) (t) = M \otimes A^{\otimes r_n+1}.$$
If $(h_n,...,h_1) \colon t^r \ra t^s$ is a morphism 
let 
$$\mcL(A;M)(h_n,...,h_1) \colon  M\otimes A^{\otimes r_n +1}  \rightarrow M \otimes A^{\otimes s_n+1} $$ 
be given by
$$  m \otimes a_0 \otimes...\otimes a_{r_n}  \mapsto   \left( m \cdot \prod_{\substack{i\in [r_n] \\  h_n(i) = +}} a_i \right) \otimes \left( \prod_{\substack{i\in [r_n] \\  h_n(i) = 0}} a_i \right) \otimes... \otimes \left(\prod_{\substack{i\in [r_n] \\  h_n(i) = s_n}} a_i \right) .$$
Then $\Tot( C^{E_n}(\mcL (A;M))) = \Sigma^{-n} (M\otimes_{A_+}(A_+ \otimes B^n(A), \delta))$. In particular, by Theorem \ref{thm:FZ} 
$$H_*^{E_n}(\mcL(A;M)) = H_*^{E_n}(A;M)$$
if $A$ is $k$-projective. Note that $\mcL(A;k)$ agrees with the extension of the Loday functor defined by Livernet and Richter in \cite[3.1]{LR11} to $\Epinp$.
\end{remark}

We now consider $E_n$-cohomology. The definition of $E_n$-cohomology is dual to the definition of $E_n$-homology.

\begin{defn} 
Let $G\colon {\Epinp}^{\op} \rightarrow \kmod$ be a functor. 
The $E_n$-cohomology of $G$ is defined as
$$ H^*_{E_n} (G) = H_*(\Tot (C_{E_n}(G))),$$
with the multicomplex $C_{E_n} (G)$ defined as follows:
We set
$$C_{E_n}^{r_n,...,r_1}(G) =  \bigoplus_{t=\tree} G(t).$$
The differentials $\partial_j\colon C_{E_n}^{r_n,...,r_1}(G) \rightarrow C_{E_n}^{r_n,...r_j+1,...,r_1}(G)$ raise the $j$th degree by one. 
For $j=n$ define $\partial_n$ restricted to $G(t)$ as
\begin{eqnarray*}
&& \sum_{\substack{0 \leq i < r_n,\\ f_n(i) = f_n (i+1)}} (-1)^{s_{n,i}} G(d_i, \id_{[r_{n-1}]},...,\id_{[r_1]}) \\
&+& 
 \sum_{\substack{0\leq l \leq r_{n-1},\\ 
\vert f_n^{-1}(l)\vert >1}} (-1)^{s_{n,\min f_{n}^{-1}(l)}-1}G(\delta_{\min f_n^{-1}(l)}, \id,...,\id) \\
& + &
\sum_{\substack{0 \leq l \leq r_{n-1}\\ \vert f_{n}^{-1}(l)\vert >1}} (-1)^{s_{n,\max f_n^{-1}(l)}}G(\delta_{\max f_n^{-1}(l)}, \id,...,\id).\end{eqnarray*}
For $1\leq j <n$ the map $\partial_j$ restricted to $G(t)$ is given by
$$\sum_{\substack{0 \leq i < r_j,\\ f_j(i) = f_j(i+1)} }\sum_{\sigma \in \sh(f_{j+1}^{-1}(i), f_{j+1}^{-1}(i+1))}\epsilon(\sigma; t_{j,i},t_{j,i+1})(-1)^{s_{j,i}} G(h_{i,\sigma}).$$
Here $h=h_{i, \sigma}$ again denotes the unique morphism of trees exhibited in \cite[Lemma 3.5]{LR11} with $h_j=d_i \colon [r_j] \ra [r_{j}-1]$, $h_l=\id$ for $l<j$ and $h_{j+1}$ restricted to $f_{j+1}^{-1}(\lbrace i, i+1 \rbrace)$ acting like $\sigma$.
\end{defn}

As was the case for $E_n$-homology this definition generalizes $E_n$-cohomology of commutative algebras with coefficients in a symmetric bimodule:

\begin{remark}\label{rem:LodayCo}
Let $\mcL^{c}(A;M)\colon {\Epinp}^{\op} \rightarrow \kmod$
be defined on  $t=\tree$ as
$$ \mcL^{c}(A;M)(t) =  \Hom_k(A^{\otimes r_n+1}, M).$$
If $(h_n,...,h_1)$ is a morphism from $t^r$ to $t^s$ 
define  
$$ \mcL^{c}(A;M)(h_n,...,h_1) \colon \Hom_k(  A^{\otimes s_n +1},M)  \rightarrow \Hom_k(  A^{\otimes r_n+1},M)$$ 
 by
$$( \mcL^{c}(A;M)(h_n,...,h_1)(f)) (a_0 \otimes ... \otimes a_{r_n})=  \left( \prod_{\substack{i\in [r_n] \\  h_n(i) = +}} a_i \right) \cdot f\left(\left( \prod_{\substack{i\in [r_n] \\  h_n(i) = 0}} a_i \right) \otimes... \otimes \left( \prod_{\substack{i\in [r_n] \\  h_n(i) = s_n}} a_i\right) \right).$$
Then $\Tot(C_{E_n}(\mcL^c(A;M)))$ coincides with the complex computing $E_n$-cohomology of $A$ with coefficients in $M$. Theorem \ref{thm:FZ} hence yields that
$$H^*_{E_n}(\mcL^c(A;M)) = H^*_{E_n}(A;M)$$
if $A$ is $k$-projective. 
\end{remark}


\section{$E_n$-cohomology as functor cohomology}

In \cite[Theorem 4.1]{LR11} Livernet and Richter show that $E_n$-homology with trivial coefficients can be interpreted as functor homology. We now extend this result to $E_n$-homology and $E_n$-cohomology with arbitrary coefficients. Like in \cite{LR11} we prove that $E_n$-homology coincides with functor homology by using the axiomatic characterizations of $\Tor$ and $\Ext$. For a background on functor homology we refer the reader to \cite{PR02}. We first show that certain projective functors are acyclic. Recall that for a small category $\mathcal{C}$ a functor $F\colon \mathcal{C} \ra \kmod$ is called projective if it has the usual lifting property with respect to objectwise surjective natural transformations. For $t \in \Epinp$ define projective functors $P_t$ and $P^t$ by
$$P_t = k[ \Epinp(t,-) ] \colon \Epinp \ra \kmod \quad \text{and} \quad P^t=  k[ \Epinp(-,t)] \colon {\Epinp}^{\op} \ra \kmod.$$

In the proof of the following lemma, we will consider trees obtained by restricting a given tree to certain leaves.

\begin{defn}\label{defn:restrictedTree}
Let $t=\tree$ be a tree. For fixed $I \subset [r_n]$ set $r^I_i = \vert f_n ... f_{i+1}(I) \vert-1$. Define a tree $t^I$ as the upper row in 
$$\xymatrix{
[r_n^I ] \ar[d]  \ar[r]^-{f_n^I} & [r_{n-1}^I ] \ar@<1ex>[d] \ar[r]^-{f_{n-1}^I} & ... \ar[r]^-{f_2^I} & [r_1^I]  \ar[d]  \\
I \ar[r]^-{f_n} & f_n(I) \ar[r]^-{f_{n-1}}& ... \ar[r]^-{f_2}& f_2...f_n(I) 
}$$
Here the vertical morphisms are determined by requiring that they are bijective and order-preserving, while the maps $f_n^I$ are defined by requiring that all squares commute.
Intuitively $t^I$ is the subtree of $t$ given by restricting $t$ to edges connecting leaves labeled by $I$ with the root. 
\end{defn}

\begin{lemma}\label{lem:MorphismDeletingLeaves}
 Let $t=\tree$ be a tree. Let $I\subset[r_n]$ be a set such that $I \cap f_n^{-1}(i)$ is a (possibly empty) interval for all $i \in [r_{n-1}]$. Then we can define a morphism $h^I \colon t \ra t^I$ in $\Epinp$ as the vertical maps in
$$\xymatrix{
[r_n] \ar[d]^{h_n^I} \ar[r]^-{f_n} & [r_{n-1}] \ar[d]^{h_{n-1}^I} \ar[r]^-{f_{n-1}} & ... \ar[r]^-{f_2} & [r_1] \ar[d]^{h_1^I}\\
[r_n^I] \ar[r]^-{f_n^I} & [r_{n-1}^I] \ar[r]^-{f_{n-1}^I} &... \ar[r]^-{f_2^I} & [r_1^I].
}$$
These are defined as follows: The map $h_n^I$ maps all $x \in [r_n]\setminus I$ to $+$ and is an order-preserving bijection restricted to $I$. For $i<n$ we require that $h_i^I$ restricted to $f_{i+1}...f_n(I)$ is the order-preserving bijection to $[r_i^I]$ and that $h_i^I$ be order-preserving on the whole set $[r_i]$. 
\end{lemma}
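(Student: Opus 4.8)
The plan is to check, clause by clause, that the family $(h_n^I,\dots,h_1^I)$ satisfies the requirements of Definition \ref{defn:Epinp} and that the square displayed in the statement is the defining diagram of a morphism $t\to t^I$. Write $I_i:=f_{i+1}\cdots f_n(I)\subseteq[r_i]$ for the image of $I$ at level $i$ (so $I_n=I$ and $f_i(I_i)=I_{i-1}$), and let $\iota_i\colon I_i\to[r_i^I]$ be the order-preserving bijection. The conditions on $h_n^I$ are immediate: since $h_n^I$ restricts to $\iota_n$ on $I$ and sends everything else to $+$, its image contains all of $[r_n^I]$, its restriction to $(h_n^I)^{-1}([r_n^I])=I$ is order-preserving hence order-preserving on each $I\cap f_n^{-1}(l)$, and the requirement that $I\cap f_n^{-1}(l)$ be an interval for every $l$ is exactly the hypothesis on $I$. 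For $1\le i<n$ the map $h_i^I$ is order-preserving on all of $[r_i]$ by construction, hence order-preserving on the fibres of $f_i$, and it is surjective because it is already bijective from $I_i$ onto $[r_i^I]$.

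The real content is the commutativity of the diagram, and the difference between the two cases is where it must hold. The top square, between levels $n$ and $n-1$, only has to commute on $(h_n^I)^{-1}([r_n^I])=I$; there it is immediate from the definition of $f_n^I$ in Definition \ref{defn:restrictedTree}, both composites sending $x\in I$ to $\iota_{n-1}(f_n(x))$. Every lower square, however, must commute on the full set $[r_i]$, and here the way $h_i^I$ is extended off $I_i$ genuinely matters. I would reduce this to a cleaner statement by writing $h_i^I=\iota_i\circ\pi_i$ for $1\le i\le n-1$, where $\pi_i\colon[r_i]\to I_i$ is an order-preserving retraction (so $\pi_i|_{I_i}=\id$). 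Using that $\iota_{i-1}$ is injective on $I_{i-1}$ and that $f_i(I_i)=I_{i-1}$, the commutativity of the square between levels $i$ and $i-1$ is then \emph{equivalent} to the coherence relation $f_i\circ\pi_i=\pi_{i-1}\circ f_i$.

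It therefore suffices to produce order-preserving retractions $\pi_1,\dots,\pi_{n-1}$ with $f_i\pi_i=\pi_{i-1}f_i$ for $2\le i\le n-1$, which I would do by induction on $i$. For $\pi_1$ take any order-preserving retraction of $[r_1]$ onto $I_1$. Given $\pi_{i-1}$, the coherence relation forces $\pi_i$ to map the fibre $f_i^{-1}(u)$ into $I_i\cap f_i^{-1}(\pi_{i-1}(u))$; grouping the fibres over each interval $\pi_{i-1}^{-1}(w)$, for $w\in I_{i-1}$, partitions $[r_i]$ into consecutive blocks $B_w=f_i^{-1}(\pi_{i-1}^{-1}(w))$, and on each $B_w$ I choose an order-preserving retraction onto $I_i\cap f_i^{-1}(w)$. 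This target is nonempty since $w\in I_{i-1}=f_i(I_i)$, and it contains every element of $I_i$ lying in $B_w$ because the only element of $I_{i-1}$ in $\pi_{i-1}^{-1}(w)$ is $w$ itself; hence $\pi_i$ fixes $I_i$. As the blocks $B_w$ and their image fibres $f_i^{-1}(w)$ are ordered compatibly inside $[r_i]$, the map $\pi_i$ glued from these pieces is order-preserving on all of $[r_i]$ and satisfies $f_i\pi_i=\pi_{i-1}f_i$. Finally, since the equivalence relation of Definition \ref{defn:Epinp} identifies morphisms that agree on the sets $I_i=f_{i+1}\cdots f_n([r_n]\setminus(h_n^I)^{-1}(+))$, the morphism $h^I$ is independent of all the choices made, so it is well defined.

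I expect the main obstacle to be exactly this coherent construction of the $\pi_i$. Extending $h_i^I$ off $I_i$ by an arbitrary order-preserving rule, for instance by snapping each vertex down to the nearest retained vertex, does not in general make the lower squares commute, as one already sees for $n=3$; the extensions at successive levels must be chosen compatibly, which is what building $\pi_i$ block-by-block over $\pi_{i-1}$ accomplishes. The one delicate point in that construction is verifying order-preservation at the boundaries between adjacent blocks, where one must use that the fibre order of $f_i$ refines the order on $[r_{i-1}]$.
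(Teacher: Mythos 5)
Your proof is correct, and it is in fact more careful than the paper's own argument. Both share the same skeleton: verify the conditions of Definition \ref{defn:Epinp} directly, and use the equivalence relation together with $I=[r_n]\setminus (h_n^I)^{-1}(+)$ to conclude that the stated requirements determine $h^I$ uniquely up to equivalence. The difference lies in the commutativity check. The paper only verifies that the squares commute after restricting the top row to the sets $f_{i+1}\cdots f_n(I)$, which is what Definition \ref{defn:restrictedTree} gives for free; but Definition \ref{defn:Epinp} demands that the squares below the top level commute on all of $[r_i]$, and you are right that this genuinely constrains how $h_i^I$ is extended off $f_{i+1}\cdots f_n(I)$. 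A minimal instance of your warning: take $n=3$, $t=\bigl([2]\xrightarrow{\id}[2]\xrightarrow{f_2}[1]\bigr)$ with $f_2(0)=0$, $f_2(1)=f_2(2)=1$, and $I=\{0,2\}$; then $h_1^I=\id_{[1]}$ and $f_2^I=\id_{[1]}$, so commutativity of the lower square forces $h_2^I(1)=1$, whereas the extension with $h_2^I(1)=0$ is order-preserving, restricts correctly to $I_2$, and yet is not a valid representative. Your inductive, block-by-block construction of compatible retractions $\pi_i$ --- using that $I_i\cap B_w=I_i\cap f_i^{-1}(w)$ is nonempty and fixed by the retraction on each block $B_w=f_i^{-1}(\pi_{i-1}^{-1}(w))$, and that the blocks and their image fibres are consistently ordered --- is exactly what is needed to exhibit one valid representative; the equivalence relation then makes $h^I$ independent of all choices. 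So your proposal reproduces the paper's argument where it is complete and supplies the existence statement that the paper's proof leaves implicit.
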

\begin{proof}
Recall that a morphism in $\Epinp$ is an equivalence class with respect to the equivalence relation introduced in Definition \ref{defn:Epinp}.  Since $I= [r_n] \setminus (h_n^I)^{-1}(+) $ the above requirements uniquely determine $h^I$ up to equivalence. The maps $h^I_i$ assemble to a morphism in $\Epinp$ since they are chosen to be order-preserving and the squares 
$$\xymatrix{
f_{i+1}...f_n(I) \ar[r]^-{f_i} \ar[d]^{h_i^I} & f_i...f_n(I) \ar[d]^{h_{i-1}^I}\\
[r_i^I] \ar[r]^-{f_i^I} & [r_{i-1}^I]
}$$
commute by definition of $f_i^I$. Furthermore  $(h_n^I)^{-1}(+) \cap f_n^{-1}(i) = I \cap f_n^{-1}(i)$ is an interval.
\end{proof}

Now we are in the position to compute the $E_n$-homology of the representable projectives.

\begin{lemma}
Fix a tree $t=\tree$. Then
$$H_*^{E_n}(P_t) = \begin{cases} 0, & *>0,\\
\bigoplus_{i \in [r_n]} k, & *=0.\end{cases}$$
\end{lemma}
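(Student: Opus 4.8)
The claim is that for a representable projective functor $P_t = k[\Epinp(t,-)]$, the $E_n$-homology is concentrated in degree zero, where it equals a free module of rank $r_n+1$ (one copy of $k$ per leaf of $t$). Let me think about what $E_n$-homology of a functor means here and why this specific answer appears.

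$E_n$-homology of $F$ is the homology of the total complex of the multicomplex $C^{E_n}(F)$, where $C^{E_n}_{r_n,\ldots,r_1}(F) = \bigoplus_{s} F(s)$ summed over trees $s$ of shape $(r_n,\ldots,r_1)$.

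For $F = P_t = k[\Epinp(t,-)]$, we have $P_t(s) = k[\Epinp(t,s)]$. So $C^{E_n}(P_t)$ has in multidegree $(r_n,\ldots,r_1)$ the free module on the set $\coprod_s \Epinp(t,s)$ over all trees $s$ of that shape. But this is just the free $k$-module on ALL morphisms out of $t$ whose target has shape $(r_n,\ldots,r_1)$. In other words, the total complex is the free $k$-module on all morphisms $t \to s$ in $\Epinp$, for all targets $s$, graded by the degree of the target tree $s$ (which is $\sum(r_j+1)$, accounting for the shift/normalization).

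**Why the answer is what it is:**

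The key insight: a morphism $t \to s$ in $\Epinp$ is determined (up to equivalence) by which leaves of $t$ get sent to $+$, plus combinatorial data about how the remaining leaves and lower vertices map down. The differential is generated by "edge-collapsing" maps ($\tilde\partial_j$) and "leaf-deleting" maps ($\delta_{\min}, \delta_{\max}$).

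The rank of $H_0$ being $r_n+1$ strongly suggests this: the degree-0 part should correspond to the "most collapsed" trees, and the free generators in homology should be indexed by the leaves of $t$. Each leaf of $t$ survives to give a generator.

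**Proof strategy:**

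Let me think about how I'd prove this.

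Approach 1 (Direct, via a filtration/contracting homotopy): Recall that for the TRIVIAL-coefficient case, Livernet-Richter already proved (their Theorem, quoted as the last displayed theorem in section 2) that $E_n$-homology of functors on $\Epin$ is $\Tor^{\Epin}(\tilde b, F)$. And representable projectives are acyclic for $\Tor$ — specifically $\Tor^{\Epin}_*(\tilde b, P_t^{\Epin}) = \tilde b(t)$ concentrated in degree 0, by the defining property of Tor (Yoneda: $\Tor_*(\tilde b, k[\mathcal C(t,-)]) = \tilde b(t)$ in degree 0).

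So the natural approach: show that the $E_n$-homology of $P_t$ (the $\Epinp$-representable) can be computed, and that the extra morphisms in $\Epinp$ (the leaf-deletion maps) contribute exactly to building up a free module indexed by leaves in degree 0.

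Approach 2 (Reduction to known acyclicity + coefficient bookkeeping): This is probably the cleanest. The new feature of $\Epinp$ over $\Epin$ is the "$+$" target. A morphism $t \to s$ in $\Epinp$ factors as: first a leaf-deletion map $h^I: t \to t^I$ (deleting leaves not in $I$, where $I$ ranges over "interval-respecting" subsets of the leaf set — exactly the sets in Lemma 4.6/\ref{lem:MorphismDeletingLeaves}), followed by an $\Epin$-morphism $t^I \to s$ (which doesn't touch $+$).

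Wait — is every morphism such a factorization? By the equivalence relation, a morphism is determined by $h_n^{-1}(+)$ (a subset $I^c$ whose complement... actually $h_n^{-1}(+)$ is the set of deleted leaves) together with the induced map on surviving leaves. The surviving-leaf set $I = [r_n]\setminus h_n^{-1}(+)$ must meet each fiber of $f_n$ in an interval. So yes: every morphism $t\to s$ factors as $h^I: t \to t^I$ followed by a morphism $t^I \to s$ in $\Epin$ (no $+$ in the target). This factorization is essentially unique.

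**My plan:**

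The plan is to build a decomposition of the total complex $\Tot(C^{E_n}(P_t))$ according to the surviving-leaf set. First I would exhibit the factorization: every morphism $g: t \to s$ in $\Epinp$ factors uniquely as $t \xrightarrow{h^I} t^I \xrightarrow{\bar g} s$ where $I = [r_n]\setminus g_n^{-1}(+)$ is the interval-respecting set of surviving leaves (Lemma \ref{lem:MorphismDeletingLeaves} supplies $h^I$, and $\bar g$ lies in the subcategory $\Epin$). This identifies the generating set of $C^{E_n}(P_t)$ in each multidegree as $\coprod_I \Epin(t^I, -)$, i.e. as a direct sum, indexed by $I$, of the generators of the $\Epin$-representable $P_{t^I}^{\Epin}$.

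Next I would analyze how the differential $\partial = \sum_j \tilde\partial_j + \delta_{\min}+\delta_{\max}$ interacts with this decomposition by $I$. The $\Epin$-part $\tilde\partial_j$ (for all $j$, including the $\tilde\partial_n$ summand) acts WITHIN a fixed $I$ — it post-composes by an $\Epin$-morphism, which cannot create or destroy $+$'s, so it preserves the surviving-leaf set $I$. The new terms $\delta_{\min},\delta_{\max}$ delete one more leaf (the min or max of a fiber), sending the $I$-summand into the $(I\setminus\{x\})$-summand. So if I filter by $|I|$ (number of surviving leaves, decreasing), the associated graded differential is exactly $\sum_j \tilde\partial_j$ restricted to each $I$-block, which is precisely the $E_n$-homology differential (trivial coefficients) of the $\Epin$-representable $P_{t^I}^{\Epin}$.

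Then I would run the resulting spectral sequence of the filtration. On the $E^1$-page, by the Livernet–Richter computation (their quoted theorem plus the fact that $\Epin$-representables are $\Tor$-acyclic against $\tilde b$), each $I$-block contributes $H_*^{E_n}(P_{t^I}^{\Epin}) = \Tor^{\Epin}_*(\tilde b, P_{t^I}^{\Epin})$, which is $\tilde b(t^I)$ concentrated in homological degree $0$. Now $\tilde b(t^I) = k$ exactly when $t^I$ is the corolla/point-tree $[0]\to\cdots\to[0]$, i.e. when $|I|=1$ (a single surviving leaf), and $0$ otherwise. Hence $E^1$ is concentrated in a single homological degree with one copy of $k$ for each singleton $I=\{x\}$, $x\in[r_n]$ — that is, $\bigoplus_{i\in[r_n]} k$. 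Being concentrated on one line, the spectral sequence collapses, giving $H_0^{E_n}(P_t)=\bigoplus_{i\in[r_n]}k$ and $H_{>0}^{E_n}(P_t)=0$.

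The main obstacle I expect is the bookkeeping in the second step: verifying rigorously that the factorization is compatible with the full differential, i.e. that under the filtration by $|I|$ the only part of $\partial$ that changes $I$ is $\delta_{\min}+\delta_{\max}$ (lowering $|I|$ by one) while every $\tilde\partial_j$ — including the internal part of $\tilde\partial_n$ that merely merges two surviving leaves — preserves $I$ and matches the $\Epin$-differential on $P_{t^I}^{\Epin}$ under the bijection of generators. This requires care with the signs $s_{n,i}$ and the shuffle signs $\epsilon$, and with the equivalence relation (so that the factorization really is well-defined on equivalence classes of morphisms). Once the identification of the associated graded complex with $\bigoplus_I \Tot(\tilde C^{E_n}(P_{t^I}^{\Epin}))$ is pinned down, invoking the Livernet–Richter acyclicity result finishes the argument cleanly.
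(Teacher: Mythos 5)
Your proposal is correct and follows essentially the same route as the paper: the paper likewise filters $\Tot(C^{E_n}(P_t))$ by the number of leaves mapped to $+$, decomposes the associated graded complex by the surviving-leaf set $I$, identifies each block with $\tilde{C}^{E_n}(\Epin(t^I,-))$ via precomposition with $h^I$ (your factorization claim is exactly the paper's $\Psi$/$\Phi$ isomorphism of Lemma \ref{lem:MorphismDeletingLeaves} and the proof), and collapses the resulting spectral sequence using the Livernet--Richter computation. The only cosmetic difference is that you deduce the acyclicity of the $\Epin$-representables from their Theorem 4.1 via projectivity and the Yoneda lemma, whereas the paper cites the computation from \cite[Section 4]{LR11} directly.
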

\begin{proof}
Set $C:= \Tot(C^{E_n}(P_t))$. We define an ascending filtration by subcomplexes of $C$ by 
$$F^pC_{s_n,...,s_1}:= \bigoplus_{t^s= \treeS} k[\lbrace (h_n,...,h_1) \in P_t( t^s) : \vert h_n^{-1}([s_n])\vert \leq p+1 \rbrace].$$ 
Hence $F^pC$ is  generated by morphisms that map at least $r_n-p$ leaves to $+$. This yields a first quadrant spectral sequence
$$ E^1_{p,q}= H_{p+q}(F^pC / F^{p-1}C) \Rightarrow H_{p+q}( C).$$ 
The quotient $F^p C / F^{p-1} C$ can be identified with the free $k$-module generated by  morphisms $(h_n,...,h_1) \in k[\Epinp(t,t^s)]$ with $\vert h_n^{-1}([s_n])\vert = p+1$. The differentials $\delta_{\min}$ and $\delta_{\max}$ vanish on this quotient.
 The remaining summands of $\partial_n$ and the differentials $\partial_{n-1},...,\partial_1$ do not change the number of leaves that get mapped to $+$.
We conclude that $F^p C / F^{p-1}C$ is isomorphic to $D$ as a complex, where 
$$D_{s_n,...,s_1} = \bigoplus_{t^s=\treeS} k[\lbrace (h_n,...,h_1) \in P_t( t^s) : \vert h_n^{-1}([s_n])\vert = p+1 \rbrace]$$ 
with differentials $\partial_1,..., \partial_{n-1}$ and $\hat{\partial}_n= \partial_n-\delta_{\min} - \delta_{\max}$.
The complex $D$ can be decomposed further: The remaining differentials do not only respect the number of deleted leaves but also the set of deleted leaves itself. Hence $D$ is the direct sum of subcomplexes $D^I$ with
$$D^I_{s_n,...,s_1} = \bigoplus_{t^s= \treeS} k[\lbrace  (h_n,...,h_1) \in P_t( t^s): h_n^{-1}([s_n])=I \rbrace]$$  
such that $I$ is a subset of $[r_n]$ of cardinality $p+1$. 
Notice that the differentials of $D$ and $D^I$ look like the differentials used in Definition \ref{def:MulticomplexTrivial} to define $E_n$-homology of functors from $\Epin$ to $\kmod$. We will show that $D^I$ in fact can be identified with the complex associated to such a functor. More precisely, $D^I$ is the complex computing $E_n$-homology of the representable functor $k[\Epin(t^I,-)] \colon \Epin \ra \kmod$:
Denote by $h^I\colon t \ra t^I$ the morphism defined in Lemma \ref{lem:MorphismDeletingLeaves}. We define $$\Psi\colon \tilde{C}^{E_n}(\Epin(t^I,-)) \rightarrow D^I$$
by mapping $j \in \Epin(t^I, t^s)$ to $\Psi(j)= j \circ h^I$. Since $j$ does not delete any leaves this yields an element of $D^I$. 
We define an inverse $\Phi$ to $\Psi$ by mapping $h\in D^I$ to the composite of the columns in
$$\xymatrix{
[r_n^I]  \ar[r]^-{f_n^I} \ar[d] & [r_{n-1}^I] \ar[r]^-{f_{n-1}^I} \ar[d] &... \ar[r]^-{f_2^I} & [r_1^I] \ar[d]\\
I \ar[d] \ar[r]^-{f_n} & f_n(I) \ar[d] \ar[r]^-{f_{n-1}} & ... \ar[r]^-{f_2} & f_2...f_n(I) \ar[d]\\
[r_n] \ar[d]^{h_n} \ar[r]^-{f_n} & [r_{n-1}]\ar[d]^{h_{n-1}} \ar[r]^-{f_{n-1}} & ... \ar[r]^-{f_2} & [r_1] \ar[d]\\
[s_n] \ar[r]^-{g_n} & [s_{n-1}] \ar[r]^-{g_{n-1}} & ... \ar[r]^-{g_2} & [s_1]
}$$
Here the upper vertical maps are order-preserving bijections while the vertical maps in the middle are inclusions. We see that $\Phi(h)_i$ only depends on ${h_i}_{\vert {f_{i+1}...f_n(I)}}$, \ie $\Phi$ is well defined on equivalence classes. It is obvious that each $\Phi(h)_i$ is surjective and that the usual requirements on commutativity are satisfied.
Consider a  fibre $(f_i^I)^{-1}(l)$: The map $\Phi(h)_i$ first sends it order-preservingly and surjectively to $f_{i+1}...f_n(I)\cap f_i^{-1}(l')\subset [r_i]$, where $l'$ denotes the image of $l$ under the map $[r_{i-1}^I] \rightarrow f_i...f_n(I)$. 
Since $h_i$  preserves the order on fibres of $f_i$ we see that $\Phi(h)_i$ is order-preserving on the fibres of $f_i^I$. 
Hence $\Phi$ is indeed a map from $D^I$ to $\tilde{C}^{E_n}(\Epi_n(t^I,-))$.

Finally we note that obviously $\Phi \circ \Psi$ is the identity. To show that $\Psi$ is a left inverse for $\Phi$ one writes down $(\Psi \circ \Phi )(h)$ for a given $h$ and uses that $((\Psi \circ \Phi)(h))_i$ only needs to coincide with $h_i$ on $f_{i+1}...f_n (I)$.
The maps $\Phi$ and $\Psi$  commute with composition, hence also with applying the differentials. Since the signs in the differentials applied to a morphism $h$ are determined by the target tree $t^s$ of $h$, there is no trouble with signs either. Hence we have constructed an isomorphism
$$ D^I \cong C^{E_n}(\Epin(t^I,-))$$
of complexes. 

We know from \cite[Section 4]{LR11} that $H_*(\Tot (\tilde C^{E_n}(\Epin(t^I,-)))) =0$ for $*>0$ and that
$$H_0(\Tot( \tilde C^{E_n}(\Epin(t^I,-)))) = \begin{cases} k, & t^I= [0] \rightarrow [0] \rightarrow ... \rightarrow [0],\\
0, & \text{else.}
\end{cases}$$
Since $t^I=[0] \rightarrow [0] \rightarrow ... \rightarrow [0]$ implies $p+1=\vert I \vert =1$ we see that the $E^1$-term of our spectral sequence is
$$E^1_{p,q} = H_{p+q}(F^pC / F^{p-1}C) = \begin{cases} \bigoplus_{i \in [r_n]} k, & p=q=0,\\
0, & \text{else.}
\end{cases}$$
The spectral sequence collapses and the claim follows.
\end{proof}

Having proved that $H^{E_n}_*(P_t)$ is acyclic we can use the axiomatic decription of $\Tor$ (see \eg \cite[Ch.2]{Gr57}). 

\begin{thm}
Denote by $b\colon{\Epinp}^{\op} \ra \kmod$ the functor given by the cokernel of
$$ (\delta_0, \id,...,\id)_* - (d_0, \id, ..., \id)_*+ (\delta_1, \id,...,\id)_* \colon 
 P^{[1] \rightarrow  [0] \rightarrow ... \rightarrow [0]} \rightarrow P^{[0]  \rightarrow...\rightarrow [0]}.$$
Then for any $F\colon \Epinp \ra \kmod$
$$H_*^{E_n}(F) \cong \Tor^{\Epinp}_*(b,F),$$
and this isomorphism is natural in $F$.
\end{thm}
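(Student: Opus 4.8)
The plan is to identify $H^{E_n}_*(-)$ with $\Tor^{\Epinp}_*(b,-)$ by recognizing the former as the universal homological $\delta$-functor extending $b\otimes_{\Epinp}-$, using the axiomatic description of $\Tor$ from \cite[Ch.2]{Gr57}. Concretely I would verify three things: that $H^{E_n}_*(-)$ is a homological $\delta$-functor natural in $F$; that it vanishes on projectives in positive degrees; and that $H^{E_n}_0(F)\cong b\otimes_{\Epinp}F$ naturally. The second point is exactly the content of the preceding lemma, which gives $H^{E_n}_*(P_t)=0$ for $*>0$; since $H^{E_n}_*$ commutes with direct sums and every projective is a retract of a direct sum of representables $P_t$, this forces vanishing on all projectives in positive degrees.

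For the $\delta$-functor structure I would observe that $F\mapsto C^{E_n}(F)$ is built levelwise as the direct sum $\bigoplus_t F(t)$ with differentials that are $k$-linear combinations of the maps $F(h)$. Hence $F\mapsto\Tot(C^{E_n}(F))$ is an exact functor from $\Fun(\Epinp,\kmod)$ to chain complexes: a short exact sequence $0\to F'\to F\to F''\to 0$ of functors yields, objectwise and therefore after forming the relevant direct sums, a short exact sequence of total complexes. The associated long exact homology sequence, together with the evident naturality of all constructions in $F$, equips $H^{E_n}_*(-)$ with the structure of a homological $\delta$-functor.

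The heart of the argument is the degree-zero identification. The total degree of a tree is $\sum_i r_i$, and since a tree is a sequence of surjections we have $r_n\geq\cdots\geq r_1$; thus total degree $0$ is realized only by $t_0=([0]\to\cdots\to[0])$ and total degree $1$ only by $t_1=([1]\to[0]\to\cdots\to[0])$. Consequently $H^{E_n}_0(F)$ is the cokernel of the single incoming differential $F(t_1)\to F(t_0)$, namely $\partial_n=\tilde\partial_n+\delta_{\min}+\delta_{\max}$ evaluated on $t_1$. Numbering the edges of $t_1$ gives $s_{n,0}=n$ and $s_{n,1}=n+1$, so this map equals $(-1)^nF(d_0,\id,\ldots,\id)+(-1)^{n-1}F(\delta_0,\id,\ldots,\id)+(-1)^{n+1}F(\delta_1,\id,\ldots,\id)$, i.e. $(-1)^{n-1}\bigl(F(\delta_0)-F(d_0)+F(\delta_1)\bigr)$. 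On the other hand, tensoring the defining presentation $P^{[1]\to[0]\to\cdots\to[0]}\to P^{[0]\to\cdots\to[0]}\to b\to 0$ of $b$ with $F$ over $\Epinp$ and using right exactness of $-\otimes_{\Epinp}F$ together with the co-Yoneda isomorphism $P^t\otimes_{\Epinp}F\cong F(t)$ identifies $b\otimes_{\Epinp}F$ with the cokernel of $F(\delta_0)-F(d_0)+F(\delta_1)$. Since the global sign $(-1)^{n-1}$ does not affect the cokernel, this yields the desired natural isomorphism $H^{E_n}_0(F)\cong b\otimes_{\Epinp}F=\Tor^{\Epinp}_0(b,F)$.

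With these three ingredients in place the theorem follows by the standard comparison theorem for $\delta$-functors: choosing for each $F$ a short exact sequence $0\to K\to P\to F\to 0$ with $P$ projective and dimension-shifting along the long exact sequences, the vanishing $H^{E_n}_{>0}(P)=0$ forces $H^{E_n}_{i+1}(F)\cong H^{E_n}_i(K)$ for $i\geq 1$, exactly as for $\Tor^{\Epinp}_*(b,-)$; combined with the degree-zero agreement this gives $H^{E_n}_*(-)\cong\Tor^{\Epinp}_*(b,-)$ by induction, naturally in $F$. I expect the only genuine obstacle to be the degree-zero computation, specifically checking that the three-term presentation of $b$ reproduces the incoming differential on $t_1$ with the correct signs; everything else is either the preceding lemma or formal homological algebra.
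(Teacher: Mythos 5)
Your proposal is correct and follows essentially the same route as the paper's own proof: exactness of $F\mapsto\Tot(C^{E_n}(F))$ giving the long exact sequences, the preceding lemma plus retract argument for vanishing on projectives, the identification of $H_0^{E_n}(F)$ with $b\otimes_{\Epinp}F$ via the co-Yoneda isomorphism and right exactness (with the same sign computation, where the overall factor $(-1)^{n-1}$ is harmless), and then the axiomatic characterization of $\Tor$. The only difference is that you spell out the dimension-shifting argument that the paper leaves implicit in its citation of Grothendieck.
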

\begin{proof}
A short exact sequence $0 \rightarrow F \rightarrow G \rightarrow H \rightarrow 0$ of functors
yields a short exact sequence of chain complexes
$$0 \rightarrow \Tot (C^{E_n}(F)) \rightarrow\Tot( C^{E_n}(G)) \rightarrow \Tot(C^{E_n}(H)) \rightarrow 0.$$
This in turn gives rise to a long exact sequence on homology. 
We already showed that $H_*^{E_n}(P_t)$ is zero in positive degrees. Every projective functor from $\Epinp$ to $\kmod$ receives a surjection from a sum of functors of the form of $P_t$. It hence is a direct summand of this sum. Therefore $H_*^{E_n}(P)$ vanishes in positive degrees for all projective functors $P$.
Finally the zeroth $E_n$-homology of a functor $F$ is given by the cokernel of
$$
(-1)^{n-1}F(\delta_0, \id,...,\id) + (-1)^n F(d_0, \id, ..., \id)+ (-1)^{n+1}F(\delta_1, \id,...,\id) . 
$$
Using the natural isomorphism $P^t\otimes_{\Epinp} F  \cong F(t)$ of $k$-modules and that tensor products are right exact, one sees that this coincides with $b \otimes_{\Epinp} F$.
\end{proof}

Every functor $F \colon \Epinp \ra \kmod$ gives rise to a functor $F^*\colon{\Epinp}^{\op} \ra \kmod$, its dual, by setting $F^*(t)= \Hom_k(F(t),k)$. Since we just proved that $E_n$-homology of projective functors vanishes, we can relate  $E_n$-homology with $E_n$-cohomology via  the following Grothendieck spectral sequence . 

\begin{proposition}[see \eg {\cite[Theorem 10.49]{Ro09}}]
\label{prop:ukt}
If $F(t)$ is $k$-free for every $t\in \Epinp$, there is a first quadrant spectral sequence
$$E^2_{p,q} = \Ext_k^q(H^{E_n}_p(F),k) \Rightarrow H^{p+q}_{E_n}(F^*).$$
In particular whenever $k$ is injective as a $k$-module, $E_n$-homology of $F$ and $E_n$-cohomology of its dual are dual to each other.
\end{proposition}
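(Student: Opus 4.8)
The plan is to identify the cochain complex computing $E_n$-cohomology of $F^*$ with the $k$-linear dual of the chain complex computing $E_n$-homology of $F$, and then to feed this identification into a standard universal coefficient spectral sequence.

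First I would record a finiteness observation: for a fixed multidegree $(r_n,\dots,r_1)$ there are only finitely many trees $t$, since such a tree is a finite sequence of order-preserving surjections between finite sets. Consequently each graded piece $C^{E_n}_{r_n,\dots,r_1}(F)=\bigoplus_t F(t)$ is a \emph{finite} direct sum, so applying $\Hom_k(-,k)$ yields $\bigoplus_t \Hom_k(F(t),k)=\bigoplus_t F^*(t)=C_{E_n}^{r_n,\dots,r_1}(F^*)$. The same finiteness shows that only finitely many multidegrees contribute to a fixed total degree, so the direct sum and direct product totalizations agree degreewise and $\Hom_k(\Tot(C^{E_n}(F)),k)=\Tot(C_{E_n}(F^*))$ as graded $k$-modules. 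Comparing the two definitions, the cohomology differentials on $C_{E_n}(F^*)$ are built from exactly the same tree morphisms ($d_i$, $\delta_{\min}$, $\delta_{\max}$, $h_{i,\sigma}$) and carry exactly the same signs ($(-1)^{s_{j,i}}$ and $\epsilon(\sigma;-,-)$) as the homology differentials on $C^{E_n}(F)$. Since $F^*$ is contravariant we have $F^*(h)=\Hom_k(F(h),k)$, so each cohomology differential is literally the $k$-dual of the corresponding homology differential with the arrow reversed. Hence $\Tot(C_{E_n}(F^*))$ is precisely the dual cochain complex $\Hom_k(\Tot(C^{E_n}(F)),k)$.

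Next, since $F(t)$ is $k$-free for every $t$ and a finite direct sum of free modules is free, $\Tot(C^{E_n}(F))$ is a bounded-below chain complex of free, hence projective, $k$-modules. The universal coefficient spectral sequence, that is, the Grothendieck spectral sequence for the composite of the homology functor with $\Hom_k(-,k)$ applied to such a complex (see \cite[Theorem 10.49]{Ro09}), then produces a first quadrant spectral sequence
$$E^2_{p,q}=\Ext_k^q\bigl(H_p(\Tot(C^{E_n}(F))),k\bigr)\Rightarrow H^{p+q}\bigl(\Hom_k(\Tot(C^{E_n}(F)),k)\bigr).$$
Substituting $H_p(\Tot(C^{E_n}(F)))=H^{E_n}_p(F)$ on the $E^2$-page and, via the identification of the previous paragraph, $H^{p+q}(\Hom_k(\Tot(C^{E_n}(F)),k))=H^{p+q}_{E_n}(F^*)$ on the abutment gives the asserted spectral sequence. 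For the final assertion, if $k$ is injective as a $k$-module then $\Ext_k^q(-,k)=0$ for $q>0$, so the spectral sequence is concentrated in the row $q=0$ and collapses; its single edge isomorphism reads $H^p_{E_n}(F^*)\cong \Hom_k(H^{E_n}_p(F),k)$, which is exactly the claimed duality.

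I expect the only genuine work to lie in the bookkeeping of the first step: checking that the totalization signs of the dual complex agree with the signs built into the definition of $C_{E_n}$, so that the two cochain complexes coincide on the nose rather than merely up to a degreewise automorphism. The definitions of $C^{E_n}$ and $C_{E_n}$ are designed to be mutually dual, so this should go through, but it is the one place where signs must be matched carefully. Once the identification is established, the proposition is a formal consequence of the universal coefficient spectral sequence.
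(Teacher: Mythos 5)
Your proof is correct and takes essentially the same route as the paper: the paper offers no argument of its own for this proposition, simply citing the universal coefficient spectral sequence of \cite[Theorem 10.49]{Ro09}, which is exactly the standard result you invoke. The identification of $\Tot(C_{E_n}(F^*))$ with the $k$-linear dual of $\Tot(C^{E_n}(F))$, justified by the finiteness of the set of trees in each multidegree and the matching signs in the two definitions, together with the collapse argument when $k$ is self-injective, is precisely the implicit content the paper leaves to the reader, and your write-up supplies it correctly.
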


Examples of commutative self-injective rings include fields, group algebras of finite commutative groups over a self-injective ring, quotients $R / I$ of a principal ideal domain $R$ with $I\neq 0$ and commutative Frobenius rings \cite[Ch.5, \S 18]{AF92}.
The product of self-injective rings is again self-injective.

\begin{thm}
Suppose that $k$ is injective as a $k$-module and let $G\colon {\Epinp}^{\op} \ra\kmod$ be a functor. Then there is an isomorphism
$$H^*_{E_n}(G) \cong \Ext_{\Epinp}^*(b,G).$$
This isomorphism is natural in $G$.
\end{thm}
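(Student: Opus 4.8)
The plan is to identify $H^*_{E_n}(-)$ with the right derived functors of $H^0_{E_n}(-)=\Hom_{\Epinp^{\op}}(b,-)$, thereby dualizing the argument used for the functor-homology theorem proved above. Concretely, I would show that $G\mapsto H^*_{E_n}(G)$ is a universal (effaceable) cohomological $\delta$-functor on $\Fun(\Epinp^{\op},\kmod)$ agreeing in degree $0$ with $\Hom_{\Epinp^{\op}}(b,-)$; since $\Ext^*_{\Epinp}(b,-)$ is the universal $\delta$-functor with these properties, the two agree and the isomorphism is automatically natural in $G$. Three points require checking: the $\delta$-functor structure, the degree-zero identification, and effaceability in positive degrees. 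The last of these is the real content, and it is where self-injectivity of $k$ enters.

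For the $\delta$-functor structure, a short exact sequence $0\ra G'\ra G\ra G''\ra 0$ in $\Fun(\Epinp^{\op},\kmod)$ is objectwise exact, and since $C_{E_n}^{r_n,\dots,r_1}(G)=\bigoplus_t G(t)$ is a direct sum of the values $G(t)$, we obtain a short exact sequence of total complexes, hence the desired long exact sequence. For the degree-zero term I would dualize the computation of $H_0^{E_n}$ from the proof of the homology theorem: $H^0_{E_n}(G)$ is the kernel of the map $G(t_0)\ra G(t_1)$ given by $(-1)^{n-1}G(\delta_0,\id,\dots,\id)+(-1)^{n}G(d_0,\id,\dots,\id)+(-1)^{n+1}G(\delta_1,\id,\dots,\id)$, where $t_0=[0]\ra\cdots\ra[0]$ and $t_1=[1]\ra[0]\ra\cdots\ra[0]$ are the only trees of total degree $0$ and $1$. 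Applying the left-exact functor $\Hom_{\Epinp^{\op}}(-,G)$ to the presentation $P^{t_1}\ra P^{t_0}\ra b\ra 0$ defining $b$, and using the Yoneda isomorphism $\Hom_{\Epinp^{\op}}(P^t,G)\cong G(t)$, identifies this kernel with $\Hom_{\Epinp^{\op}}(b,G)$.

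The main obstacle is effaceability, that is, producing for each $G$ a monomorphism into a functor on which $H^{>0}_{E_n}$ vanishes. Here I would use the dual representables $P_t^{*}:=\Hom_k(k[\Epinp(t,-)],k)\colon\Epinp^{\op}\ra\kmod$. Since evaluation at $t$ is exact with right adjoint $V\mapsto\Hom_k(k[\Epinp(t,-)],V)$ and $k$ is injective over itself, each $P_t^{*}$ is injective in $\Fun(\Epinp^{\op},\kmod)$. Applying Proposition \ref{prop:ukt} to the $k$-free functor $F=P_t$ gives $H^{p}_{E_n}(P_t^{*})\cong\Hom_k(H^{E_n}_p(P_t),k)$, which vanishes for $p>0$ because the representable projectives satisfy $H^{E_n}_{>0}(P_t)=0$, as established above. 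Moreover $H^*_{E_n}$ carries products to products, because at each multidegree $C_{E_n}(-)$ is a \emph{finite} direct sum and cohomology of a product of complexes is the product of the cohomologies; hence $H^{>0}_{E_n}$ vanishes on every product of the $P_t^{*}$ and on their retracts. Finally, every $G$ embeds into $\prod_t\Hom_k(k[\Epinp(t,-)],E_t)$, where $E_t$ is the injective hull of the $k$-module $G(t)$, the embedding being assembled from the units of the evaluation–coinduction adjunctions.

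The delicate last step is to see that these ambient functors have no higher $E_n$-cohomology even though $E_t$ is an arbitrary injective $k$-module rather than $k$ itself: this is precisely where one needs that, for the self-injective rings under consideration, every injective $k$-module is a retract of a product of copies of $k$, so that $\Hom_k(k[\Epinp(t,-)],E_t)$ is a retract of a product of $P_t^{*}$'s. This yields effaceability and completes the identification $H^*_{E_n}(G)\cong\Ext^*_{\Epinp}(b,G)$. (Alternatively, one can avoid effaceability altogether by observing that the homology complex exhibits $\Tot(C^{E_n}(-))\cong\bigl(\bigoplus_{t}P^t\bigr)\otimes_{\Epinp}(-)$, so that $\mathcal{P}_\bullet:=\bigoplus_{t}P^t$ is an explicit projective resolution of $b$; applying $\Hom_{\Epinp^{\op}}(-,G)$ then gives $\Tot(C_{E_n}(G))\cong\Hom_{\Epinp^{\op}}(\mathcal{P}_\bullet,G)$ directly.)
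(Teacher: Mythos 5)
Your main argument is, in substance, the paper's own proof: the paper establishes exactly your three points in the same way --- long exact sequences "as in the homological case"; injectivity of $P_t^*$ (there deduced from $P_t$ being finitely generated and $k$-free with $k$ self-injective, which is your evaluation--coinduction adjunction in disguise); acyclicity of $P_t^*$ from Proposition \ref{prop:ukt} together with $H^{E_n}_{>0}(P_t)=0$; and the degree-zero identification via the Yoneda lemma and left exactness of $\Nat_{\Epinp}(-,G)$. Where you differ is only in explicitness: the paper compresses your product-compatibility observation (finitely many trees in each total degree) and your embedding $G\hookrightarrow\prod_t\Hom_k(k[\Epinp(t,-)],E_t)$ into the single sentence that all injective functors are acyclic ``since they are direct summands of products of these.''

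The step you flag as delicate deserves the flag, and it is a soft spot of the paper's proof as well as yours: ``every injective $k$-module is a retract of a product of copies of $k$'' says that $k$ is an injective \emph{cogenerator}, which is strictly stronger than self-injectivity. The paper explicitly allows products of self-injective rings, and an infinite product of fields $k=\prod_i F_i$ is self-injective but not a cogenerator: a simple quotient of $k$ by a maximal ideal containing $\bigoplus_i F_i$ admits no nonzero map to $k$. So your proof matches the paper's level of rigor here but inherits its gap. Two repairs are available to you. (i) You never need \emph{all} injectives to be acyclic, only the coinduced functors you embed into; and $\Tot(C_{E_n}(\Hom_k(k[\Epinp(t,-)],E)))\cong\Hom_k(\Tot(C^{E_n}(P_t)),E)$, where $\Tot(C^{E_n}(P_t))$ is a bounded-below complex of free $k$-modules whose homology is free and concentrated in degree zero, hence is homotopy equivalent to its homology; so these functors are acyclic for \emph{every} $E$, with no hypothesis on $k$. (ii) Better still, your closing parenthesis is not merely an alternative but the cleanest proof, and I would promote it to the main argument: the functors $\mathcal{P}_m=\bigoplus_t P^t$ (sum over trees of total degree $m$), with differentials induced by the structure maps of Definition \ref{def:FunktorHomWithCoeffsDiffs}, satisfy $\mathcal{P}_\bullet(s)=\Tot(C^{E_n}(P_s))$, so the computation of $H_*^{E_n}(P_s)$ carried out before the homology theorem, read objectwise, shows $\mathcal{P}_\bullet$ is a projective resolution of $\coker(\mathcal{P}_1\ra\mathcal{P}_0)=b$; applying $\Nat_{\Epinp}(-,G)$ and Yoneda returns $\Tot(C_{E_n}(G))$ on the nose. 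This yields the theorem naturally in $G$, for an arbitrary commutative ring $k$, avoids effaceability and universality arguments entirely, and reproves the Tor statement in the same stroke.
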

\begin{proof}
That $H^*_{E_n}$ maps short exact sequences to long exact sequences follows as in the homological case. Since the projective functor $P_t$ is finitely generated and $k$-free, the functor $P_t^*$ is injective. The universal coefficient spectral sequence \ref{prop:ukt} yields that these modules are acyclic. But then all other injective modules are acyclic, too, since they are direct summands of products of these. 
Finally let $G\colon {\Epinp}^{\op} \ra \kmod$ be an arbitrary functor. Then the zeroth $E_n$-cohomology of $G$ is by definition the kernel of
\begin{eqnarray*}
(-1)^{n-1}G(\delta_0, \id,...,\id) + (-1)^n G(d_0, \id, ..., \id)+ (-1)^{n+1}G(\delta_1, \id,...,\id).
\end{eqnarray*}
The Yoneda lemma  and the left exactness of $\Nat_{\Epinp}(-,G)$ yield that this kernel results from applying $\Nat_{\Epinp}(-,G)$ to $b$.
\end{proof}


\section{Functor cohomology and cohomology operations}

We recall the definition of the Yoneda pairing on $\Ext$. The Yoneda pairing is usually defined  in the context of modules over a ring (see \eg \cite[III.5, III.6]{ML95}). But it is well known to be easily generalized to suitable abelian categories with enough projectives and injectives. We assume that $k$ is self-injective in this section. 
\begin{defn}\label{def:yoneda}
Let $F,G$ and $H$ be functors from ${\Epinp}^{\op}$ to $\kmod$. Let $P_F$ denote a projective resolution of $F$ and $I_H$ an injective resolution of $H$. There is a pairing
$$\mu \colon \Ext^*_{\Epinp}(G,H) \otimes \Ext^*_{\Epinp}(F,G) \rightarrow \Ext^*_{\Epinp}(F,H),$$
defined as the composite
$$\xymatrix{
 \Ext^m_{\Epinp}(G,H)  \otimes  \Ext^n_{\Epinp}(F,G) \ar@{=}[d]\\
 H_m(\Nat_{\Epinp}(G, I_H)) \otimes 
H_n(\Nat_{\Epinp}(P_F, G)) \ar[d]\\ 
  H_{n+m}(\Nat_{\Epinp}(G,I_H) \otimes \Nat_{\Epinp}(P_F, G)) \ar[d]  \\
H_{n+m} (\Nat_{\Epinp}(P_F, I_H)) = \Ext^{n+m}_{\Epinp}(F,H). }$$
Here the second map is induced by composing natural transformations. This associative pairing is called the Yoneda pairing.
\end{defn}

In particular there is a natural action of $\Ext^*_{\Epinp}(b,b) = H^*_{E_n}(b)$ on $E_n$-cohomology. One could hope to find cohomology operations via this action. For example, if the characteristic of $k$ is a prime $p$, Hochschild cohomology $HH^*(A;A_+)$ is a $p$-restricted Gerstenhaber algebra, \ie the Lie algebra structure on $\Sigma^{-1} HH^*(A;A_+)$ comes with a restriction. We will determine $H^*_{E_n}(b)$ to see whether we can find new or old cohomology operations using the Yoneda pairing.
For the remainder of this section we will denote $b\colon \Epinp \ra \kmod$ by $b_n$ since we will have to consider trees of varying levels.
Since we are going to work homologically we make $b_n^*$, the dual of $b_n$, explicit. Intuitively, $b_n^*$ is the functor assigning to a tree its set of leaves.

\begin{proposition}
The functor $b_n^*$ dual to $b_n$ assigns $k\langle[r_n]\rangle =k[\lbrace 0,...,r_n\rbrace] $ to a given tree $t=\tree$. Denoting the generators of $k\langle [r_n] \rangle$ by $\alpha_{0},..., \alpha_{r_n}$, it induces the maps 
\begin{eqnarray*}
b_n^*(\tau_n,...,\tau_{j+1}, d_i, \id,...,\id)\colon k\langle [r_n] \rangle \rightarrow k\langle [r_n] \rangle, && 
\alpha_{m}  \mapsto \alpha_{\tau_n^{-1}(m)} \\
\text{for suitable $\tau_{j+1} \in \Sigma_{[r_{j+1}]}, ...,\tau_n \in \Sigma_{[r_n]} $ as in \cite[3.5]{LR11}}, &&\\
b_n^*(d_i,\id,...,\id) \colon k\langle [r_n+1] \rangle \rightarrow k\langle [r_n] \rangle , && \alpha_{m}  \mapsto 
\begin{cases} \alpha_m, & m \leq i,\\
\alpha_{m-1}, &m >i,
\end{cases}
\\
b_n^*(\delta_i, \id, ..., \id) \colon k\langle [r_n+1] \rangle \rightarrow k\langle [r_n] \rangle , &&  \alpha_{m}  \mapsto 
\begin{cases} \alpha_m, & m < i,\\
0, & m=i,\\
\alpha_{m-1},& m >i.
\end{cases}
\end{eqnarray*}
\end{proposition}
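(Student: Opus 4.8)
The plan is to make $b_n$ itself explicit and then dualize, exploiting that all modules in sight are finitely generated and $k$-free. Write $e=([0]\ra\cdots\ra[0])$ and $e'=([1]\ra[0]\ra\cdots\ra[0])$, so that by definition $b_n=\coker(\rho\colon P^{e'}\ra P^e)$ with $\rho=(\delta_0,\id,\dots,\id)_*-(d_0,\id,\dots,\id)_*+(\delta_1,\id,\dots,\id)_*$. First I would describe the sets $\Epinp(t,e)$ and $\Epinp(t,e')$ concretely. Since every $h_i$ with $i<n$ is the unique map to $[0]$, a morphism $t\ra e$ is determined, up to the equivalence relation of Definition~\ref{defn:Epinp}, by the subset $U=h_n^{-1}(0)\subseteq[r_n]$; the conditions on $h_n$ force $U$ to be nonempty and to meet each fibre $f_n^{-1}(l)$ in an interval, and every such admissible $U$ occurs. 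Thus $P^e(t)$ is $k$-free on admissible subsets. Likewise a morphism $t\ra e'$ corresponds to a decomposition $[r_n]=A\sqcup B\sqcup C$ with $A,B$ nonempty, $A\cup B$ admissible, and the $A$-part preceding the $B$-part inside each fibre of $f_n$.

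Next I would read off the relations imposed by $\rho$. Composing the morphism $t\ra e'$ given by $(A,B,C)$ with $\delta_0$, $d_0$ and $\delta_1$ produces the admissible subsets $B$, $A\cup B$ and $A$, so $\rho$ imposes exactly the cut relations $[A\cup B]=[A]+[B]$, valid whenever $A\cup B$ is admissible and split fibrewise into an initial part $A$ and a final part $B$. As any admissible $U$ with at least two elements admits such a cut (for instance by peeling off its global minimum), iterating yields $[U]=\sum_{x\in U}[\{x\}]$ in $b_n(t)$. I would then define $\Theta\colon P^e(t)\ra k\langle[r_n]\rangle$ by $[U]\mapsto\sum_{x\in U}\alpha_x$; it kills the image of $\rho$, hence descends to $b_n(t)$, and $\alpha_x\mapsto[\{x\}]$ is a two-sided inverse. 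This gives $b_n(t)\cong k\langle[r_n]\rangle$ with the singletons as basis, and dualizing this finite free module yields $b_n^*(t)=\Hom_k(b_n(t),k)\cong k\langle[r_n]\rangle$ with $\alpha_0,\dots,\alpha_{r_n}$ the dual basis.

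Finally I would compute the induced maps from a single intrinsic formula. For $\psi\colon t\ra t'$, precomposition gives $b_n(\psi)[U']=[\psi_n^{-1}(U')]=\sum_{\psi_n(x)\in U'}[\{x\}]$, whose $k$-dual sends $\alpha_x\mapsto\alpha_{\psi_n(x)}$ under the convention $\alpha_+:=0$. Specializing this to the three generating families gives the statement: for $(d_i,\id,\dots,\id)$ one obtains $\alpha_m\mapsto\alpha_{d_i(m)}$, that is $\alpha_m$ for $m\leq i$ and $\alpha_{m-1}$ for $m>i$; for $(\delta_i,\id,\dots,\id)$ one obtains $\alpha_m\mapsto\alpha_{\delta_i(m)}$, that is $\alpha_m$, $0$ or $\alpha_{m-1}$ according as $m<i$, $m=i$ or $m>i$; and for the shuffle morphism $h_{i,\sigma}$, whose top component is the permutation $\tau_n$ recorded in \cite[3.5]{LR11}, one obtains $\alpha_m\mapsto\alpha_{\tau_n^{-1}(m)}$.

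I expect the main obstacle to lie in the bookkeeping of the first two steps: pinning down exactly which top maps $h_n$ survive the interval condition, the fibrewise order-preservation, and the equivalence relation, so that $P^e(t)$ and $P^{e'}(t)$ acquire the claimed bases and $\rho$ produces precisely the cut relations with no further identifications. The only remaining subtlety is reconciling the indexing of $\tau_n$ with the convention of \cite[3.5]{LR11}; this is what converts the intrinsic rule $\alpha_x\mapsto\alpha_{\psi_n(x)}$, where $\psi_n$ is the actual top-level map, into the stated $\alpha_m\mapsto\alpha_{\tau_n^{-1}(m)}$.
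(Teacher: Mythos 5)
Your proposal is correct, and it is worth noting that the paper itself states this proposition without any proof at all --- it is treated as a direct unwinding of the definition of $b_n$ as a cokernel --- so your argument supplies exactly the verification that the paper leaves implicit. All three steps are sound: morphisms $t\ra e$ and $t\ra e'$ are indeed determined by their top components once the equivalence relation of Definition~\ref{defn:Epinp} is taken into account (the lower components are the unique maps to $[0]$), so $P^e(t)$ is free on nonempty subsets $U\subseteq[r_n]$ meeting each fibre of $f_n$ in an interval, and $P^{e'}(t)$ is free on the fibrewise-ordered decompositions $(A,B,C)$; the map $\rho$ then imposes precisely the cut relations $[A\cup B]=[A]+[B]$, and peeling off minima reduces every generator to the sum of its singletons, giving $b_n(t)\cong k\langle[r_n]\rangle$ with the singletons as basis; dualizing the finitely generated free module and computing the transpose of $[U']\mapsto[\psi_n^{-1}(U')]$ yields the intrinsic rule $\alpha_x\mapsto\alpha_{\psi_n(x)}$ with $\alpha_+:=0$, which specializes correctly to the $d_i$ and $\delta_i$ families.

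The one point you leave unresolved --- and flag honestly --- is the reconciliation of this intrinsic rule with the stated $\alpha_m\mapsto\alpha_{\tau_n^{-1}(m)}$ for the shuffle morphisms. Be aware that under the reading suggested by the paper's own notation, where $\tau_n$ is the actual top-level component of the morphism $h_{i,\sigma}$ of Definition~\ref{def:MulticomplexTrivial} (whose level-$(j{+}1)$ component \emph{acts like} $\sigma$), your rule gives $\alpha_m\mapsto\alpha_{\tau_n(m)}$, not $\alpha_{\tau_n^{-1}(m)}$. So either the permutations recorded in \cite[Lemma 3.5]{LR11} are the inverses of the actual components (in which case your last sentence is the correct reconciliation), or the exponent $-1$ in the proposition is a slip. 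This cannot be settled from the present paper alone, but it is harmless for everything downstream: the only property used later, in the proof that $H_*^{E_n}(b_n^*)$ is concentrated in degree zero, is that these maps permute the generators and fix $\alpha_0$ whenever the morphism fixes the leftmost leaf, and this holds for $\tau_n$ and $\tau_n^{-1}$ alike. Your proof establishes the mathematically correct statement in either convention.
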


We will show that $b^*_n$ is indeed acyclic with respect to $E_n$-homology. The case $n=1$ can be easily calculated:

\begin{proposition}\label{prop:extbHH}
For $n=1$ we have 
$$H^r_{E_1}(b_1) \cong H_r^{E_1}(b_1^*) =0$$ 
for $r>0$ and 
$$H^0_{E_1}(b_1)\cong H_0^{E_1}(b_1^*)=k.$$
\end{proposition}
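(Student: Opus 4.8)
The plan is to deduce both isomorphisms from the single computation of the homology $H_*^{E_1}(b_1^*)$, and then to carry out that computation by cutting the chain complex down to a tiny subcomplex that carries all of its homology. First I would reduce the cohomological statement to the homological one. Since $b_1^*$ takes the objectwise finitely generated free value $k\langle[r]\rangle$ at a tree $[r]$, and one checks directly that $b_1$ is objectwise finitely generated free (so that the canonical map $b_1 \to (b_1^*)^*$ is an isomorphism), I would apply the universal coefficient spectral sequence of Proposition \ref{prop:ukt} to $F = b_1^*$. As $k$ is self-injective the terms $\Ext_k^q(-,k)$ vanish for $q>0$, the spectral sequence collapses, and one obtains $H^r_{E_1}(b_1) \cong \Hom_k(H_r^{E_1}(b_1^*),k)$ naturally in each degree. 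It therefore suffices to prove $H_0^{E_1}(b_1^*) \cong k$ and $H_r^{E_1}(b_1^*)=0$ for $r>0$; both chains of isomorphisms in the statement then follow, using $k \cong \Hom_k(k,k)$ in degree zero.

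Next I would make the complex $\Tot(C^{E_1}(b_1^*))$ explicit. For $n=1$ there is a unique tree $[r]$ with $r+1$ leaves, so by Definition \ref{def:FunktorHomWithCoeffsDiffs} the complex is $C_r = b_1^*([r]) = k\langle[r]\rangle$ with basis $\alpha_0,\dots,\alpha_r$ and differential $\partial = \tilde\partial_1 + \delta_{\min} + \delta_{\max}$. Feeding the formulas for $b_1^*$ on the maps $d_i$, $\delta_0$ and $\delta_r$ from the preceding proposition into these differentials, together with $s_{1,i}=i+1$, yields an explicit bidiagonal operator $\partial(\alpha_j) = c_j\alpha_j + d_j\alpha_{j-1}$ whose coefficients $c_j,d_j \in \{0,\pm 1\}$ depend only on the parities of $j$ and $r-j$; in particular $c_r=0$ and $\partial_1 = 0$, so that already $H_0^{E_1}(b_1^*) = C_0 = k$.

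The key observation I would then exploit is that the line $T_r = k\alpha_r \subset C_r$ spanned by the \emph{top} leaf in each degree forms a subcomplex: because $c_r = 0$, the image $\partial(\alpha_r) = d_r\alpha_{r-1}$ again lies in the top-leaf line. I would show that the inclusion $\iota\colon T \hookrightarrow C$ is a homotopy equivalence by producing the diagonal contracting homotopy $s(\alpha_j) = (-1)^{j+1}\alpha_j$, for which a direct check yields $\partial s + s\partial = \id - \iota\pi$ with $\pi\colon C \to T$ the projection. It remains to compute $H_*(T)$: the differential $T_r \to T_{r-1}$ is multiplication by $d_r$, which equals $1$ when $r$ is positive and even and $0$ otherwise, so the consecutive lines cancel in pairs and the homology is $k$ in degree zero and vanishes above. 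This computes $H_*^{E_1}(b_1^*)$ and hence proves the proposition.

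The main obstacle is the sign bookkeeping: extracting the correct coefficients $c_j,d_j$ from the intricate signs $s_{n,i}$ of Definition \ref{def:MulticomplexTrivial}, and verifying the homotopy relation. In the latter the off-diagonal contributions to $\partial s + s\partial$ must cancel, which happens precisely because $(-1)^{j+1}+(-1)^{j}=0$, while the diagonal contributions collapse to the identity on every generator except the top leaf $\alpha_r$, where they vanish and reproduce exactly the projection onto $T$. Once this identity is confirmed the remainder of the argument is immediate.
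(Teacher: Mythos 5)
Your proof is correct, but there is essentially nothing in the paper to compare it against: the paper states Proposition \ref{prop:extbHH} without proof, presenting it as an easy calculation, so your argument supplies exactly what is left to the reader. I verified the two computations everything hinges on. With $s_{1,i}=i+1$, the differential on $C_r=b_1^*([r])=k\langle\alpha_0,\dots,\alpha_r\rangle$ is indeed bidiagonal, $\partial(\alpha_j)=c_j\alpha_j+d_j\alpha_{j-1}$, with $c_j=(-1)^{j+1}$ when $j<r$ and $r-j$ is even, $c_j=0$ otherwise (in particular $c_r=0$, because $b_1^*(\delta_r)$ annihilates $\alpha_r$), and $d_j=1$ when $j$ is positive and even, $d_j=0$ otherwise; hence $\partial$ vanishes on $C_1$ and $H_0=C_0=k$, matching the Hochschild-type differential in the paper's remark on the twisting cochain. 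The homotopy identity $\partial s+s\partial=\id-\iota\pi$ also checks out: the off-diagonal contributions cancel by $(-1)^{j}+(-1)^{j+1}=0$, and for $j<r$ exactly one of the two ambient degrees $r$, $r+1$ produces a nonzero diagonal coefficient, giving $(-1)^{j+1}\cdot(-1)^{j+1}=1$, while for $j=r$ both vanish; moreover $\pi$ is a chain map (this needs $c_{r-1}=0$, which holds since $r-(r-1)$ is odd), so $\iota$ is genuinely a homotopy equivalence and $H_*(C)\cong H_*(T)$. Your duality reduction is precisely the mechanism the paper provides for this purpose (Proposition \ref{prop:ukt} under the standing self-injectivity assumption of the section), and the contraction onto the top-leaf line is a tidy structural alternative to computing kernels and images of the bidiagonal operator degree by degree; as a bonus, $\partial\vert_{C_1}=0$ gives $H_0^{E_1}(b_1^*)=b_1^*([0])$ on the nose, which is the form in which the paper later uses this proposition in the acyclicity theorem for general $n$. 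The one assertion you should spell out rather than leave as an aside is that $b_1$ is objectwise finitely generated free, since this is what justifies $b_1\cong(b_1^*)^*$ and hence the applicability of Proposition \ref{prop:ukt}: for $n=1$, morphisms $[r]\to[0]$ in $\Epinp$ correspond to nonempty intervals $[a,b]\subseteq[r]$ (the preimages of $0$), morphisms $[r]\to[1]$ correspond to split intervals $[a,c]$, $[c+1,b]$, and the cokernel defining $b_1$ imposes precisely the relations $[a,b]=[a,c]+[c+1,b]$, so $b_1([r])$ is free on the singleton intervals $[i,i]$, $i=0,\dots,r$. With that detail included, your proof is complete.
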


For $n>1$ we derive the acyclicity of $b_n^*$ from the case $n=1$.
For this we need the following lemma.
Recall that the differential $\partial_n$ is induced by morphisms which act on the top level of a given tree. Intuitively the following lemma states that $\partial_n$ can be split into parts that correspond to morphisms acting on the different fibres.

\begin{lemma}\label{lemma:multisplithorizontal}
Let $F\colon \Epinp \ra \kmod$ be a functor and $r_1,...,r_{n-1} \geq 0$. Consider the $r_{n-1}+1$-fold multicomplex 
$$M_{x_0,...,x_{r_{n-1}}}(F) = \bigoplus_{\substack{t=\xymatrix{[x_0+...+x_{r_{n-1}}] \ar[r]^-{f_n} &... \ar[r]^-{f_2}& [r_1]   },\\ \vert f_n^{-1}(0)\vert = x_0 +1, \vert f_n^{-1}(i)\vert = x_i \text{ for all $1\leq i \leq r_{n-1}$}}}F(t)$$
with $i$-th differential $d^i$ the part of $\partial_n$ induced by morphisms operating on the fibre $f_n^{-1}(i)$.
Then 
$$\Tot(M) \cong 
\Sigma^{-r_1 - ... -r_{n-1}}(C_{(*, r_{n-1},...,r_1)}^{E_n}(F), \partial_n).$$
Furthermore we can split M into submulticomplexes corresponding to the underlying $(n-1)$-level tree $T$: Let $t^{x_0+1,x_1,...,x_{r_{n-1}}}$ be the tree extending $T$ with top level fibres of arity $x_0+1, x_1,..., x_{r_{n-1}}$. Let
$$M^T_{x_1,...,x_{r_{n-1}}} =  F(t^{x_0+1,x_1,...,x_{r_{n-1}}}).$$
Then
$$M_{*,...,*}(F) = \bigoplus_{T = \xymatrix{[r_{n-1}] \ar[r]^-{f_{n-1}} & ... \ar[r]^-{f_2} & [r_1]}} (M^T_{*,...,*}, d^0,...,d^{r_{n-1}}).$$
\end{lemma}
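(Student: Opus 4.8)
The plan is to derive both assertions—that $M$ is a multicomplex and that its total complex reproduces $(C^{E_n}_{(*,r_{n-1},\dots,r_1)}(F),\partial_n)$ up to suspension and splits over the underlying lower tree—from the single structural observation that every summand of $\partial_n$ is concentrated in one top-level fibre and leaves the underlying $(n-1)$-level tree fixed. First I would inspect the summands of $\partial_n=\tilde{\partial}_n+\delta_{\min}+\delta_{\max}$ from Definition \ref{def:FunktorHomWithCoeffsDiffs}. Each is induced by a morphism of one of the shapes $(d_i,\id,\dots,\id)$, $(\delta_{\min f_n^{-1}(l)},\id,\dots,\id)$, or $(\delta_{\max f_n^{-1}(l)},\id,\dots,\id)$; in all cases only the top component is non-identity, so the target tree has the same lower data $f_{n-1},\dots,f_2$ and the same $r_1,\dots,r_{n-1}$, i.e.\ the same underlying $(n-1)$-level tree $T$. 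Moreover such a morphism either merges two adjacent leaves inside a single fibre $f_n^{-1}(l)$ or deletes the minimal or maximal leaf of one such fibre, hence lowers exactly one fibre arity $x_l$ by one and fixes the others. Collecting the summands of $\partial_n$ according to the index $l$ of the fibre they act on defines operators $d^0,\dots,d^{r_{n-1}}$ with $\partial_n=\sum_{l=0}^{r_{n-1}}d^l$, each $d^l$ lowering the grading $x_l$ by one.

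Next I would establish the multicomplex relations for $(M,d^0,\dots,d^{r_{n-1}})$ without touching the signs. By Lemma \ref{lemma:checkchaincomplex} we already have $\partial_n^2=0$. Expanding $\partial_n^2=\sum_{l,l'}d^l d^{l'}$ and sorting by multidegree, the operator $(d^l)^2$ is the unique contribution lowering $x_l$ by two and fixing all other gradings, while for $l\neq l'$ the combination $d^l d^{l'}+d^{l'}d^l$ is the unique contribution lowering both $x_l$ and $x_{l'}$ by one. Since these multidegrees are pairwise distinct, the identity $\partial_n^2=0$ forces $(d^l)^2=0$ for all $l$ and $d^l d^{l'}+d^{l'}d^l=0$ for $l\neq l'$, which is precisely the statement that $M$ is an $(r_{n-1}+1)$-fold multicomplex.

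The identification with the single-direction complex is then a comparison of gradings. A tree $t$ contributing to multidegree $(x_0,\dots,x_{r_{n-1}})$ has $|f_n^{-1}(0)|=x_0+1$ and $|f_n^{-1}(i)|=x_i$, hence $r_n=x_0+\dots+x_{r_{n-1}}$ leaves shifted by one, so $M_{x_0,\dots,x_{r_{n-1}}}(F)$ sits in total degree $r_n$ and the total differential $\sum_l d^l$ equals $\partial_n$. Thus $\Tot(M)$ in degree $r_n$ is $C^{E_n}_{(r_n,r_{n-1},\dots,r_1)}(F)$ with differential $\partial_n$, and the shift $\Sigma^{-r_1-\dots-r_{n-1}}$ only reconciles this with the degree $r_n+r_1+\dots+r_{n-1}$ carried by the same piece inside the full total complex $\Tot(C^{E_n}(F))$; this gives $\Tot(M)\cong\Sigma^{-r_1-\dots-r_{n-1}}(C^{E_n}_{(*,r_{n-1},\dots,r_1)}(F),\partial_n)$. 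For the splitting over $T$, I would use that a planar fully grown tree with prescribed top fibre arities is determined by $T$ together with those arities, so the sum defining $M_{x_0,\dots,x_{r_{n-1}}}(F)$ breaks up as $\bigoplus_T F(t^{x_0+1,x_1,\dots,x_{r_{n-1}}})$; since each $d^l$ fixes $T$, the differentials respect this decomposition and $M=\bigoplus_T(M^T,d^0,\dots,d^{r_{n-1}})$ follows.

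I expect the only genuine subtlety to lie in the first step, namely checking that the assignment of each summand of $\partial_n$ to a fibre index $l$ is unambiguous and that the deletion maps $\delta_{\min},\delta_{\max}$ together with the merging maps comprising $\tilde{\partial}_n$ all truly preserve $T$. The potentially delicate signs coming from the edge-labels $s_{n,\cdot}$ never have to be disentangled, because the multicomplex axioms are deduced from the already-established relation $\partial_n^2=0$ rather than verified directly.
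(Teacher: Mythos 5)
Your proposal is correct, and it follows the paper's overall architecture: decompose $\partial_n$ fibrewise into $d^0,\dots,d^{r_{n-1}}$, identify $\Tot(M)$ with the shifted single-direction complex by matching gradings, and obtain the splitting over $T$ from the fact that every $d^l$ fixes the lower levels. Where you genuinely diverge is the verification of the multicomplex relations. The paper checks anticommutativity directly by a sign argument: for $i<j$ the map $d^i$ deletes or merges edges lying to the left of $f_n^{-1}(j)$, which shifts the edge numbering governing the signs $s_{n,\cdot}$ by one, whence $d^id^j=-d^jd^i$. You instead invoke the already established identity $\partial_n^2=0$ (from Lemma \ref{lemma:checkchaincomplex} together with the multicomplex structure of \cite{LR11}) and separate it by multidegree: since $(d^l)^2$ is the only component of $\partial_n^2$ shifting $x_l$ by $-2$, and $d^ld^{l'}+d^{l'}d^l$ the only one shifting $x_l$ and $x_{l'}$ each by $-1$, all these components vanish individually. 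This formal argument is valid, avoids any sign bookkeeping, and has the merit of delivering $(d^l)^2=0$ by the same stroke, a point the paper's proof leaves implicit; what it costs is the structural explanation of \emph{why} the fibrewise differentials anticommute, which the paper's edge-numbering observation makes transparent and which is the kind of insight reused in the sign discussions elsewhere in the argument. Your closing caveat, that each summand of $\partial_n$ is attached to a unique fibre index and preserves $T$, is indeed the only point needing inspection, and your treatment of it (the lower components of $(d_i,\id,\dots,\id)$ and $(\delta_j,\id,\dots,\id)$ are identities; merging needs two adjacent leaves in one fibre, deletion needs a fibre of arity at least two) is complete.
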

\begin{proof}
The differential $\partial_n$ is the sum of the maps $d^i$ for $0 \leq i \leq r_{n-1}$, each of them leaving all $1$-fibres except for $f_n^{-1}(i)$ unchanged. Two such differentials $d^i$ and $d^j$ commute except for their signs: Since $d^i$ deletes or merges edges left of $f_n^{-1}(j)$ for $i<j$, we find that $d^i d^j = - d^j d^i$. Hence it is clear that up to a shift  we can interpret $C_{(*, r_{n-1},...,r_1)}^{E_n}(F)$ as a total complex as above. 
All the differentials $d^i$ leave the lower levels of a tree $t$ as they were. Hence the splitting above holds, allowing us to consider one $(n-1)$-tree shape at a time.
\end{proof}

\begin{thm}
For all $n \geq 0$
$$H_s^{E_n}(b_n^*) = \begin{cases}
k, & s=0,\\
0,& s>0.
\end{cases}$$
\end{thm}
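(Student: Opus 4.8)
The plan is to argue by induction on $n$, reducing the level-$n$ computation to the level-$(n-1)$ one. The base case $n=1$ is Proposition \ref{prop:extbHH}, so fix $n\geq 2$ and assume the statement for $n-1$. I would compute $H_*^{E_n}(b_n^*)$ from the spectral sequence of the multicomplex $C^{E_n}(b_n^*)$ obtained by filtering by the lower degrees $r_1+\dots+r_{n-1}$: the differentials $\partial_1,\dots,\partial_{n-1}$ lower this filtration while $\partial_n$ preserves it, so $E^0$ has differential $\partial_n$, $E^1=H_*(C^{E_n}(b_n^*),\partial_n)$, and the induced $E^1$-differential comes from $\partial_1+\dots+\partial_{n-1}$. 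The entire point is to identify this $E^1$-page with the complex computing $H_*^{E_{n-1}}(b_{n-1}^*)$.

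To compute the $\partial_n$-homology I would invoke Lemma \ref{lemma:multisplithorizontal}: for fixed $r_1,\dots,r_{n-1}$ the differential $\partial_n$ splits as a direct sum over $(n-1)$-level tree shapes $T$ of the total complexes of the fibrewise multicomplexes $M^T$, whose $i$-th direction records the arity of the top fibre $f_n^{-1}(i)$. Since $b_n^*(t)=k\langle[r_n]\rangle=\bigoplus_{i\in[r_{n-1}]}k\langle f_n^{-1}(i)\rangle$ splits along the top fibres, and each face operator occurring in $d^i$ acts on the summand $k\langle f_n^{-1}(j)\rangle$ by the corresponding $E_1$-type face when $i=j$ and by an order-preserving relabelling isomorphism when $i\neq j$, the multicomplex decomposes as $M^T=\bigoplus_j M^{T,j}$, where $M^{T,j}$ has constant value $k\langle f_n^{-1}(j)\rangle$. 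I would then exhibit $M^{T,j}$ as an external tensor product $K^{(j)}\otimes\bigotimes_{i\neq j}L^{(i)}$, where $K^{(j)}$ is the complex computing $H_*^{E_1}(b_1^*)$ on the marked fibre and each $L^{(i)}$ is the $E_1$-complex with trivial coefficients on the relabelling directions.

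Each factor is acyclic away from degree zero: $K^{(j)}$ by Proposition \ref{prop:extbHH}, and $L^{(i)}$ by the extra-degeneracy contracting homotopy furnished by the face $\delta_{\min}$ (equivalently, this is the $n=1$ trivial-coefficient computation of Livernet--Richter). As all modules are finitely generated free, the K\"unneth theorem over $k$ gives $H_*(\Tot M^{T,j})=k$ concentrated in total degree zero, namely the summand where every top fibre has arity one. Summing over $j$ yields $H_*(\Tot M^T)=\bigoplus_{j\in[r_{n-1}]}k=k\langle[r_{n-1}]\rangle=b_{n-1}^*(T)$, again in degree zero. Hence $E^1$ is concentrated in top-degree zero and, as $T$ ranges over all $(n-1)$-level trees, equals $\bigoplus_T b_{n-1}^*(T)=C^{E_{n-1}}_{r_{n-1},\dots,r_1}(b_{n-1}^*)$. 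Checking that the residual differentials $\partial_1,\dots,\partial_{n-1}$ restrict on this subquotient to the $E_{n-1}$-differentials of $b_{n-1}^*$ then identifies $E^2=H_*^{E_{n-1}}(b_{n-1}^*)$, which by induction is $k$ in degree zero and vanishes otherwise. Since $E^2$ lives in a single total degree the spectral sequence collapses, giving the claim.

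The main obstacle is the middle step: verifying that the fibrewise decomposition and the tensor factorization hold on the nose with the correct signs. Concretely, I expect the bookkeeping of the edge-numbering signs $(-1)^{s_{n,i}}$ and the shuffle signs $\epsilon(\sigma;-,-)$ to be delicate --- one must check that restricting $d^j$ to a single fibre reproduces the $E_1$-signs up to a global sign (harmless for homology), that the relabelling directions really carry the trivial-coefficient differential, and finally that the induced $E^1$-differentials $\partial_1,\dots,\partial_{n-1}$ match the $E_{n-1}$-differentials for $b_{n-1}^*$ once the trivial top level is collapsed. None of this is conceptually hard, but it is precisely the sign-sensitive bookkeeping that Lemma \ref{lemma:multisplithorizontal} is designed to organize.
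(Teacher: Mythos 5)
Your reduction starts out exactly as the paper's proof does (filter by the lower degrees so that the $E^0$-differential is $\partial_n$, split $\partial_n$ fibrewise via Lemma \ref{lemma:multisplithorizontal}, and decompose $b_n^*(t)=\bigoplus_{j}k\langle f_n^{-1}(j)\rangle$ into summands $M^{T,j}$ of tensor type), but the crucial claim that each relabelling factor $L^{(i)}$ is ``acyclic away from degree zero'' is false, and with it the identification $E^1\cong C^{E_{n-1}}(b_{n-1}^*)$ and the entire inductive step. Count the signs: on a generator $\alpha_m$ with $m\notin f_n^{-1}(i)$, every elementary morphism occurring in $d^i$ acts by one and the same relabelling bijection, and for a fibre of arity $y$ these morphisms are $\delta_{\min}$, the $y-1$ faces and $\delta_{\max}$, carrying the signs $(-1)^{s-1},(-1)^{s},\dots,(-1)^{s+y-1}$ with $s=s_{n,\min f_n^{-1}(i)}$, i.e.\ $y+1$ consecutive, strictly alternating signs. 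Hence $d^i$ acts on $L^{(i)}$ by $0$ when $y$ is odd and by $\pm1$ when $y$ is even; in particular the bottom differential, from arity $2$ to arity $1$, is an isomorphism. So $L^{(i)}$ is the complex $\cdots\ra k\xrightarrow{\;0\;}k\xrightarrow{\;\pm1\;}k$, whose homology vanishes in \emph{every} degree, the bottom one included: the ``insert a leaf'' homotopy furnished by the $\delta$-summands contracts it to zero, not onto a copy of $k$ (this is precisely what distinguishes these complexes, which contain the two $\delta$'s, from the simplicial-point pattern you invoke, where the bottom differential would be zero). K\"unneth therefore gives $H_*(\Tot M^{T,j})=0$ whenever $r_{n-1}\geq 1$, so your $E^1$-page is not $C^{E_{n-1}}(b_{n-1}^*)$; it is concentrated in the single tree $[0]\ra\cdots\ra[0]$, where it equals $k$ in total degree $0$.

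You can see the failure already for $n=2$ and $T=[1]$: the $\partial_2$-differential from the two trees $[2]\ra[1]$ down to the tree $[1]\ra[1]$ sends the passive leaf of the tree with fibre arities $(2,1)$ to $-\beta_1$ and, symmetrically, the passive leaf of $(1,2)$ to $-\beta_0$, hence is surjective onto $b_2^*([1]\ra[1])=k^2$; the bottom $\partial_2$-homology is therefore $0$, whereas your $E^1$ would require it to be $b_1^*([1])=k^2$. Your final answer is nevertheless correct, but only by accident: your erroneous $E^1$-page happens to have the same homology ($k$ in degree $0$, by your induction) as the true $E^1$-page, which is already concentrated there. Once the homology of the factors is computed correctly, your own decomposition collapses into the paper's argument --- the $\partial_n$-homology vanishes for $r_{n-1}\geq 1$ and equals $k$ at the corolla by Proposition \ref{prop:extbHH} --- so no induction on $n$ is needed and no identification of residual differentials arises; this is exactly how the paper proceeds, taking $d^0$-homology first and then observing that $d^1$ acts on the surviving cycles $\alpha_0$ by alternating sums of $r+2$ identities, which is again acyclic in all degrees.
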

\begin{proof}
We will prove that $H_*(C_{(*, r_{n-1},...,r_1)}^{E_n}(b_n^*), \partial_n)$ vanishes except for $r_{n-1}=0$.
Note that if $r_{n-1}=0$ this forces $r_{n-2},..., r_1=0$, and $(C_{(*, 0,...,0)}^{E_n}(b_n^*), \partial_n)$ is isomorphic to $C_*^{E_1}(b_1^*)$. By Proposition \ref{prop:extbHH} and Lemma \ref{lemma:multisplithorizontal} this gives rise to a copy of $k$ in $H_0^{E_n}(b_n^*)$.

Now fix $r_{n-1} \geq 1, r_{n-2},...,r_1 \geq 0$.  Let $T=\xymatrix{[r_{n-1}] \ar[r]^-{f_{n-1}} & ... \ar[r]^-{f_2} & [r_1]}$ be a $(n-1)$-level tree. Consider the corresponding summand $M^T$ of the multicomplex $M(b_n^*)$ discussed in Lemma \ref{lemma:multisplithorizontal}. According to the lemma it suffices to show that the homology of the total complex associated to $M^T$ is trivial for all trees $T$ as above.
Let us start by calculating the homology of $M^T$ in the zeroth direction, \ie for each given $x_1,...,x_{r_{n-1}}\geq 1$ we consider the complex 
$$(M^T_{*,x_1,...,x_{r_{n-1}}}, d^0) =(\bigoplus_{\substack{t=\xymatrix{[*+x_1+...+x_{r_{n-1}}] \ar[r]^-{f_n} & ...\ar[r]^-{f_2} & [r_1]}, \\ \vert f_n^{-1}(0)\vert = * +1, \vert f_n^{-1}(i)\vert = x_i}}b_n^*(t), d^0).$$
Since we fixed $T$, for each $p$ there is exactly one tree $t=\tree$ with  $\vert f_n^{-1}(0)\vert = p +1$ and $ \vert f_n^{-1}(i)\vert = x_i$  for $1\leq i \leq r_{n-1}$.
Let $q=r_n-p.$ The differential $d^0$ maps $\alpha_j \in b_n^*(t) = k\langle \alpha_0,...,\alpha_{p+q}\rangle$ to 
$$
 (-1)^{n-1}b_n^*(\delta_0, \id,...,\id)(\alpha_j) 
+ \sum_{i=0}^{p-1}(-1)^{n+i} b_n^*(d_i,\id,...,\id)(\alpha_j) 
+ (-1)^{n+p}b_n^*(\delta_p,\id,...,\id)(\alpha_j).
$$
Thus for $j\leq p$ the element $d^0(\alpha_j)$ coincides up to a sign $(-1)^{n-1}$ with the image of $\alpha_j \in b_1^*([p])$ under the differential $d_{E_1}$ of $C_*^{E_1}(b_1^*)$.  If $j>p$ all the induced morphisms are the identity. Hence $(M^T_{*,x_1,...,x_{r_{n-1}}}, d^0)$ is isomorphic to
$$\xymatrix{
... \ar[r]^-{d_{E_1} \oplus 0} & \; b_1^*([3]) \oplus  k^q \; \ar[r]^-{d_{E_1} \oplus \id} & \; b_1^*([2]) \oplus  k^q\;  \ar[r]^-{d_{E_1} \oplus 0} &  \; b_1^*([1]) \oplus  k^q\;  \ar[r]^-{d_{E_1} \oplus   \id } & \; b_1^*([0]) \oplus  k^q
}$$
and $H_p(M^T_{*,x_1,...,x_{r_{n-1}}}, d^0)$ is concentrated in degree $p=0$ where it is $k$. We showed in Proposition \ref{prop:extbHH} that $H_0^{E_1}(b_1^*)=b_1^*([0])$. Hence a cycle in $H_0(M^T_{*,x_1,...,x_{r_{n-1}}}, d^0)$ is given by $\alpha_0 \in b_n^*(t^{1,x_1,...,x_{r_{n-1}}})$, where $t^{1,x_1,...,x_{r_{n-1}}}$ is the tree which extends $T$ with top level fibres of arity $1, x_1,..., x_{r_{n-1}}$.

We now determine how $d^1$ acts on these cycles. The differential $d^1$ is induced by morphisms acting on leaves in the second to left top level fibre. All of these morphisms leave the leftmost leaf invariant and therefore each of the induced maps sends $\alpha_0$ to $\alpha_0$. Hence for fixed $x_2,...,x_{r_{n-1}}\geq 1$ the chain complex $(H_0(M^T_{*,*,x_2...,x_{r_{n-1}}}, d^0), d^1)$ is one-dimensional on the generator $\alpha_0$ in each degree $r$ with differential 
$$d^1(\alpha_0) = (-1)^{2n-1}  \sum_{i=0}^{r+1}(-1)^{i} \alpha_0.$$
We see that the homology of $(H_0(M^T_{*,*,x_2...,x_{r_{n-1}}}, d^0), d^1)$ vanishes completely and  the homology of the total complex of $M^T$ is zero. Hence $(C_{(*, r_{n-1},...,r_1)}^{E_n}(b_n^*), \partial_n)$ has trivial homology as well, whenever $r_{n-1} \geq 1$.
\end{proof}

\begin{corollary}No nontrivial cohomology operations arise on $E_n$-cohomology via the Yoneda pairing defined in \ref{def:yoneda}.
\end{corollary}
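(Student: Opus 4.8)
The plan is to identify the Yoneda algebra explicitly and show that it is concentrated in cohomological degree zero. By the theorem of Section 5 identifying $E_n$-cohomology with $\Ext$ over $\Epinp$, the functor $\Ext^*_{\Epinp}(b,-)$ is naturally isomorphic to $H^*_{E_n}(-)$. Hence the algebra acting on $E_n$-cohomology through the Yoneda pairing of Definition \ref{def:yoneda} is
$$\Ext^*_{\Epinp}(b,b) \cong H^*_{E_n}(b) = H^*_{E_n}(b_n).$$
It therefore suffices to compute $H^*_{E_n}(b_n)$ and to verify that it contributes only the identity operation.

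First I would invoke the theorem just proved, which gives $H_s^{E_n}(b_n^*) = k$ for $s=0$ and $0$ for $s>0$. Since $b_n^*$ takes the finitely generated free values $k\langle [r_n]\rangle$ described in the preceding proposition, it is $k$-free, and its double dual satisfies $b_n^{**}=b_n=b$. I would then feed $F=b_n^*$ into the universal coefficient spectral sequence of Proposition \ref{prop:ukt},
$$E^2_{p,q} = \Ext_k^q(H^{E_n}_p(b_n^*),k) \Rightarrow H^{p+q}_{E_n}(b_n).$$
Because $k$ is self-injective throughout this section, $\Ext_k^q(k,k)$ vanishes for $q>0$ and equals $k$ for $q=0$, so the $E^2$-page is concentrated at $(p,q)=(0,0)$ with value $k$. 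The spectral sequence collapses and yields $H^s_{E_n}(b_n)=k$ for $s=0$ and $0$ for $s>0$.

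Consequently the Yoneda algebra $\Ext^*_{\Epinp}(b,b)$ is concentrated in degree zero, where it is $\Nat_{\Epinp}(b,b)\cong k$. Every class in it is thus a scalar multiple of the identity natural transformation, and the Yoneda action of such a class on $H^*_{E_n}(G)$ is multiplication by that scalar, which preserves cohomological degree. No degree-raising operation can arise, which is precisely the assertion of the corollary.

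The only genuine bookkeeping is the identification $b_n^{**}=b_n$ and the verification that the generator detected in $H^0_{E_n}(b_n)$ corresponds to the identity transformation; both are immediate from the explicit formulas for $b_n^*$ in the preceding proposition. I therefore expect no real obstacle once the preceding theorem computing $H_*^{E_n}(b_n^*)$ is in hand, that computation being where all the work lies.
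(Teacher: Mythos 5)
Your proposal is correct and follows essentially the same route as the paper: the paper's corollary is exactly the combination of the theorem computing $H_*^{E_n}(b_n^*)$, the universal coefficient duality of Proposition \ref{prop:ukt} (using that $b_n^*$ is objectwise finitely generated free, so $b_n^{**}=b_n$), and the identification $\Ext^*_{\Epinp}(b,b)\cong H^*_{E_n}(b)$ from Section 5, which the paper leaves implicit and you spell out. The only cosmetic remark is that the vanishing of $\Ext^q_k(k,k)$ for $q>0$ needs no self-injectivity (it holds since $k$ is projective over itself); self-injectivity is what makes Proposition \ref{prop:ukt} and the Section 5 $\Ext$-identification available in the first place.
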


\end{document}